\newtheorem{theorem}{Theorem}
\newtheorem{lemma}{Lemma}
\newtheorem{corollary}{Corollary}
\newtheorem{fact}{Fact}
\theoremstyle{remark}
\theoremstyle{definition}
\renewcommand\iff{\longleftrightarrow}
\newcommand\iffdef{\stackrel{\text{\textup{df}}}{\longleftrightarrow}}
\newcommand\defeq{\coloneqq}
\newcommand{\power}{\EuScript{P}}
\newcommand{\powerne}{\power_{\!+}}
\newcommand\rarrow{\rightarrow}
\newcommand\larrow{\leftarrow}
\DeclareMathOperator{\ingr}{\sqsubseteq}
\DeclareMathOperator{\ingrs}{\ingr_{\mathrm{S}}}
\DeclareMathOperator{\ningr}{\nsqsubseteq}
\DeclareMathOperator{\ningrs}{\ningr_{\mathrm{S}}}
\DeclareMathOperator{\ext}{\Lbag}  
\DeclareMathOperator{\overl}{\bigcirc}
\DeclareMathOperator{\overls}{\overl_{\mathrm{S}}}
\newcommand{\MS}{\mathbf{MS}}
\newcommand\calA{\mathcal{A}}
\newcommand{\+}{\mathrel{\mathsf{S}}} 
\newcommand*{\rom}[1]{\expandafter\@slowromancap\romannumeral #1@}
\newcommand{\Sumd}{\+_{\ingr}}
\newcommand{\poverl}{\overl_{\mathord{\ingr}}}
\newcommand{\pext}{\ext_{\mathord{\ingr}}}
\newcommand{\soverl}{\overl_{\mathord{\+}}} 
\newcommand{\sext}{\ext_{\mathord{\+}}} 
\DeclareMathOperator{\Ingr}{\mathbb{P}}
\newcommand{\Ingrs}{\Ingr_{\mathord{\+}}} 
\newcommand{\ingrd}{\ingr_{\+}}
\let\ingrs=\ingrd
\newcommand{\Sigmas}{\Sigma_{\mathord{\+}}}
\newcommand\Sums{\mathrel{\+_{\mathord{\ingrs}}}}
\renewcommand{\MS}{\mathbf{M}}
\renewcommand{\ext}{\mathrel{\bot}}
\newcommand{\bfS}{\mathbf{S}}
\medskip\color{magenta}{\noindent RG: \  }}{\hfill{$\square$}\color{black}\par\medskip}
\title{The sum relation as a primitive concept of mereology}
\author{Rafa\l\ Gruszczy\'nski, Dazhu Li}
\address{Rafa\l\ Gruszczy\'nski\\
Department of Logic\\
Nicolaus Copernicus University in Toru\'n\\
Poland\\
\textsc{Orcid:} 0000-0002-3379-0577}
\email{gruszka@umk.pl}
\urladdr{www.umk.pl/\textasciitilde gruszka}
\address{Dazhu Li\\
Institute of Philosophy\\
Chinese Academy of Sciences\\
Beijing, China;
Department of Philosophy\\
University of Chinese Academy of Sciences\\
Beijing, China\\
\textsc{Orcid:} 0000-0003-4780-1705}
\email{lidazhu@ucas.ac.cn}
\begin{document}

\maketitle

\begin{abstract}
Mereology in its formal guise is usually couched in a language whose signature contains only one primitive binary predicate symbol representing the \emph{part of} relation, either the proper or improper one. In this paper, we put forward an approach to mereology that uses \emph{mereological sum} as its primitive notion, and we demonstrate that it is definitionally equivalent to the standard parthood-based theory of mereological structures.

    \medskip

\noindent MSC: 03A05

\medskip

\noindent Keywords: mereology, mereological sum, mereological axioms, definitional equivalence of mereological theories, theory of relations
\end{abstract}

\section{Introduction}\label{sec:motivation}

`Mereology' is a catch-all term encompassing the whole menagerie of theories of parts and whole, both formal and informal. As formal theories, various systems of mereology usually share a common language whose signature contains only one primitive binary predicate symbol representing the \emph{part of} relation. Various axioms are applied to it, leading to a variety of theories that represent different philosophical perspectives.

However, \emph{parthood} is not the only choice among the primitives of mereology. \emph{Disjointness} is taken as basic by \cite{Leonard-Goodman-TCOIAIU}, and \emph{overlap}---the complement of disjointness---by \cite{Goodman-SP}.\footnote{See \cite{Parsons-TMPOM}, \cite[Chapter 2.4]{Cotnoir-Varzi-M} and \cite[Chapter IV]{Pietruszczak-M-eng} for more on the axiomatizations of mereology with these primitives.} The choice of the primitive symbol often reflects different philosophical perspectives, particularly concerning metaphysics, and indeed, can play a pivotal role in resolving many philosophical debates (see e.g., \citealp{Varzi-M}).

Among the notions of mereology, that of a \emph{mereological sum} stands out as one of the most intriguing, and probably most controversial. As defined within a mereological system in terms of one of the aforementioned primitives the sum relation has been examined extensively from the perspectives of both logic and philosophy.\footnote{For the former analysis see \citep{Cotnoir-Varzi-M}, \citep{Gruszczynski-Pietruszczak-HTDMS}, \citep{Gruszczynski-MFAAUB}, \citep{Gruszczynski-Pietruszczak-RSMSPS}, \citep{Pietruszczak-AGCOBAPOAW}, \citep{Hovda-WICM}, \citep{Pietruszczak-M-eng}, \citep{Pietruszczak-FTP}, for the latter \citep{Cotnoir-Varzi-M}, \citep{Cotnoir-Baxter-CAI}, \citep{Varzi-M}, to mention a few.} Despite this, no one has so far put forward a system of mereology in which mereological sum is a primitive concept. We intend to fill this gap in the realm of mereological investigations. To be more precise, we put an axiomatic system whose primitives are `$\+$' interpreted as mereological sum, and the standard set-theoretical `$\in$'.  Our approach in model theoretical, that is we put in focus structures $\langle M,\+\rangle$ where $M$ is the domain and $\+$ is a binary (hybrid) relation in the Cartesian product of $M$ and the power set of~$M$. We propose a set of axioms strong enough to show that the structures satisfying them are definitionally equivalent to and in one-to-one correspondence with mereological structures $\langle M,\mathord{\ingr}\rangle$ in the sense of \cite{Pietruszczak-PM}. In the event, we prove that the \emph{mereological sum} concept can indeed replace the binary \emph{part of} relation as a primitive notion of mereology.

 The paper is organized as follows.  In Section~\ref{sec:parthood-mereology}, we briefly overview the theory of mereological structures formulated with a primitive binary relation of parthood. In Section \ref{sec:axioms-for-sum} we formulate axioms for mereology based on the notion of \emph{mereological sum}, and we establish basic properties of \emph{sum structures}. In particular, we demonstrate that the \emph{part of} relation is definable in the framework of sum structures and satisfies all standard axioms for mereological structures. Section~\ref{sec:sum-axioms-in-MS} contains the proof that all sum axioms hold in mereological structures, with the sum defined in the standard way. Together with the results from Section~\ref{sec:axioms-for-sum} this shows that the two approaches are definitionally equivalent and that there is a one-to-one correspondence between mereological and sum structures. In Section~\ref{sec:independence} we prove the independence of the set of axioms for the sum relation, and in Section~\ref{sec:summary} we take the stock of the paper outcomes and put forward problems for future work.

 In the paper, we use standard logical notation. Moreover, we abbreviate `$\neg\exists$' with `$\nexists$'. For any set $X$, $\power(X)$ is the set of all subsets of $X$, and $\powerne(X)$ the set of all its \emph{non-empty} subsets.

\section{Mereological structures}\label{sec:parthood-mereology}

In this section, we concisely review the definition of the class $\MS$ of \emph{mereological structures} that consists of pairs of the form $\langle M,\mathord{\ingr}\rangle$, where $M$ is a non-empty class of {\em objects} and $\mathord{\ingr}\subseteq M\times M$ is a binary relation on $M$ called \emph{part of\/} relation and satisfies the following `$\ingr$-axioms' (`$x\ningr y$' abbreviates `$\neg x\ingr y$'):
\begin{gather}\allowdisplaybreaks
x\ingr x\,,\tag{P1}\label{P1}\\
x\ingr y\wedge y\ingr x\rarrow x=y\,,\tag{P2}\label{P2}\\
x\ingr y\wedge y\ingr z\rarrow x\ingr z\,,\tag{P3}\label{P3}\\
x\ningr y\rarrow(\exists z\in M)\,\bigl(z\ingr x\wedge(\nexists u\in M)\, (u\ingr z\wedge u\ingr y)\bigr)\,,\tag{P4}\label{P4}\\
\begin{split}
(\forall{X\in\powerne(M)})&(\exists{x\in M})\,\bigl((\forall y\in X)\,y\ingr x\wedge{}\\
&(\forall a\in M)\,(a\ingr x\rarrow(\exists y\in X)(\exists z\in M)\,(z\ingr y\wedge z\ingr a))\bigr)\,.
\end{split}\tag{P5}\label{P5}
\end{gather}

 The axioms \eqref{P1}-\eqref{P3} require  the relation $\ingr$ to be a  partial order, i.e. it is reflexive, transitive and  anti-symmetric. The axiom \eqref{P4} states  if an object $x$ is not part of $y$, then $x$ has a part $z$ that has no common parts with $y$. Finally, \eqref{P5} reads  for any non-empty collection $X$ of objects, there exists an object $x$ such that (i) all objects in $X$ are parts of $x$ and (ii) any part of $x$ has common parts with some object in $X$.

 The axioms \eqref{P4} and \eqref{P5} can be made more perspicuous with the help of the following derived notions of, respectively, \emph{overlap} and \emph{disjointness} (or \emph{incompatibility}):
 \begin{align}
 x\poverl y&{}\iffdef(\exists z\in M)\,(z\ingr x\wedge z\ingr y)\,,\tag{$\mathrm{df}\,\mathord{\poverl}$}\label{df:overl}\\
x\pext y&{}\iffdef(\nexists z\in M)\,(z\ingr x\wedge z\ingr y)\,,\tag{$\mathrm{df}\,\mathord{\pext}$}\label{df:ext}
\end{align}
and the standard notion of \emph{mereological sum}, which is a hybrid relation between elements of the domain and its subsets:
\begin{equation}\tag{$\mathrm{df}\,\mathord{\Sumd}$}\label{df:Sum}
   x\Sumd X\iffdef (\forall y\in X)\,y\ingr x\wedge (\forall a\in M)\,(a\ingr x\rarrow(\exists y\in X)\,a\poverl y)\,.
\end{equation}
With these we can express \eqref{P4} and \eqref{P5} as:
\begin{align}
    x\ningr y\rarrow(\exists z\in M)\,(z\ingr x\wedge z\pext y)\,,\tag{P4$'$}\label{P4'}\\
    (\forall X\in\powerne(M))(\exists x\in M)\,x\Sumd X\,.\tag{P5$'$}\label{P5'}
\end{align}
Usually \eqref{P4'} is called the \emph{strong supplementation principle} and \eqref{P5'} the \emph{unrestricted sum axiom}. The axiom system we employ is presented and analyzed in \citep{Pietruszczak-PM} and is one of the possible modern formulations of Le\'sniewski's mereology.\footnote{As is well known, the system is not independent as the reflexivity of the \emph{part-of} relation can be derived from transitivity and strong supplementation.}

Similarly to \eqref{df:overl} and \eqref{df:ext}, we introduce the following notions of {\em $\Sumd$-overlap} and {\em $\Sumd$-disjointness}:
\begin{align}
x\overl_{\Sumd} y&\:\iffdef (\exists X, Y\in\power(M))(x\Sumd X\wedge y\Sumd Y\wedge X\cap Y\neq\emptyset)\,,\tag{$\mathrm{df}\,\mathord{\overl_{\Sumd}}$}\label{df:overl-Sumd}\\
x\ext_{\Sumd} y&\:\iffdef (\forall X,Y\in \power(M)) (x\Sumd X\wedge y\Sumd Y\rarrow X\cap Y=\emptyset)\,.\tag{$\mathrm{df}\,\mathord{\ext_{\Sumd}}$}\label{df:ext-Sumd}
\end{align}
The intended interpretation behind $\overl_{\Sumd}$ is clear: $x$ and $y$  $\Sumd$-overlap if they sum collections that share at least one object. W.r.t. the relations between $\poverl$ and $\overl_{\Sumd}$, and $\pext$ and  $\ext_{\Sumd}$, we have the following:

\begin{fact}\label{fact:dfSum-dfOverl->ESums-overl}
For any $\langle M,\mathord{\ingr}\rangle\in\MS$, it holds that:
\begin{align}
    x\overl_{\Sumd} y &{}\iff x\poverl y\,,\\
    x\ext_{\Sumd} y& {}\iff x\pext y\,.
\end{align}
\end{fact}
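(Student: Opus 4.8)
The plan is to obtain the second equivalence from the first by a purely logical duality, and to prove the first by a short direct argument. For the duality, note that by \eqref{df:ext} the formula $x\pext y$ is literally the negation of $x\poverl y$; and unwinding \eqref{df:overl-Sumd}, the negation of $x\overl_{\Sumd}y$ says that for all $X,Y\in\power(M)$ one does not have $x\Sumd X\wedge y\Sumd Y\wedge X\cap Y\neq\emptyset$, equivalently that $x\Sumd X\wedge y\Sumd Y\rarrow X\cap Y=\emptyset$ for all such $X,Y$, which is exactly \eqref{df:ext-Sumd}. Hence $x\ext_{\Sumd}y$ is the negation of $x\overl_{\Sumd}y$, so once the first equivalence is in hand the second follows by contraposition. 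It therefore suffices to prove that $x\overl_{\Sumd}y$ holds iff $x\poverl y$.

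For the left-to-right direction I would take $X,Y\in\power(M)$ and $z\in X\cap Y$ with $x\Sumd X$ and $y\Sumd Y$, and simply read off from the first conjunct of \eqref{df:Sum} that $z\ingr x$ (as $z\in X$) and $z\ingr y$ (as $z\in Y$); by \eqref{df:overl} this means $x\poverl y$.

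The converse is where the small amount of content sits. Given $x\poverl y$, fix $z\in M$ with $z\ingr x$ and $z\ingr y$. The key observation is that $x\Sumd\{x,z\}$: the first conjunct of \eqref{df:Sum} holds since $x\ingr x$ by \eqref{P1} and $z\ingr x$ by the choice of $z$; the second conjunct holds since any $a\ingr x$ satisfies $a\poverl x$ (witnessed by $a$ itself, using \eqref{P1} and \eqref{df:overl}) and $x\in\{x,z\}$. Symmetrically $y\Sumd\{y,z\}$, and since $z\in\{x,z\}\cap\{y,z\}$ this pair of collections witnesses $x\overl_{\Sumd}y$.

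I do not expect a real obstacle. The only subtlety is that in the converse the witnessing collections must contain $x$ and $y$ themselves: with $\{z\}$ alone the supplementation clause (the second conjunct of \eqref{df:Sum}) would in general fail to hold. Beyond that, one just has to remember the little quantifier-duality step that lets the $\pext/\ext_{\Sumd}$ equivalence piggyback on the $\poverl/\overl_{\Sumd}$ one.
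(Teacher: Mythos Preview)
Your proof is correct and follows essentially the same route as the paper: both directions of the first equivalence are handled identically (using $\{x,z\}$ and $\{y,z\}$ as the witnessing collections for the $\larrow$-direction), and the second equivalence is dispatched by duality where the paper simply says ``similar.'' Your extra remarks---the explicit check of the second conjunct of \eqref{df:Sum} and the caution about $\{z\}$ alone---are sound and add clarity without changing the argument.
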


\begin{proof}
We merely consider for the first equivalence, and the other is similar.

\smallskip

($\rarrow$) If $x\Sumd X$ and $y\Sumd  Y$ and $z\in X\cap Y$, then from \eqref{df:Sum} we obtain that $z\ingr x\wedge z\ingr y$. Therefore $x\poverl y$ by \eqref{df:overl}.

\smallskip

($\larrow$) Let $z\ingr x$ and $z\ingr y$. We define $X:=\{x,z\}$ and $Y:=\{y,z\}$.  By \eqref{P1}, $x\ingr x$ and $y\ingr y$. Then, with \eqref{df:Sum} we have $x\Sumd X$ and $y\Sumd Y$. Obviously, $X\cap Y\not=\emptyset$. So, by \eqref{df:overl-Sumd}, it holds that $x\overl_{\Sumd} y$, as desired. \qedhere

\end{proof}

It was Le\'sniewski's idea that the notion of \emph{part} can be characterized by means of the notion of mereological sum in the following way:\footnote{For details and the discussion of Le\'sniewski's approach to collections and sets see \cite[Chapter 1]{Pietruszczak-M-eng}.}
\begin{center}
\begin{tabular}{rcl}
      $x$ is part of $y$ &iff &there is a collection of objects $X$ such that $x$ is among $X$-es \\
     & &  and  $y$ is a mereological sum of $X$.
\end{tabular}
\end{center}
Let us observe that for $\Sumd$ the above can be proven to obtain.

\begin{fact}\label{fa:ingr=ingrsingr}
    If $\langle M,\mathord{\ingr}\rangle\in\mathbf{M}$, then\/\textup{:}
    \[
        x\ingr y\iff(\exists X\in\power(M))\,(y\Sumd X\wedge x\in X)\,.
    \]
\end{fact}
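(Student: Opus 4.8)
The plan is to prove both implications of the biconditional directly from the definition \eqref{df:Sum} and the $\ingr$-axioms.

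\medskip

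\noindent\textbf{Proof proposal.} For the direction ($\Larrow$), suppose there is $X\in\power(M)$ with $y\Sumd X$ and $x\in X$. By the first conjunct of \eqref{df:Sum}, every element of $X$ is part of $y$; since $x\in X$, we get $x\ingr y$ immediately. This direction requires nothing but unpacking the definition.

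\medskip

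For the direction ($\Rarrow$), suppose $x\ingr y$. The natural candidate for the witnessing collection is $X:=\{x,y\}$ (or even $X:=\{x\}$, but including $y$ makes the first conjunct trivial to check at $y$ itself). I would verify $y\Sumd X$ using \eqref{df:Sum}: the first conjunct demands that every member of $X$ be part of $y$, which holds because $x\ingr y$ by assumption and $y\ingr y$ by \eqref{P1}; the second conjunct demands that every $a\ingr y$ overlaps some member of $X$, and this is witnessed by $a$ itself, since $a\ingr a$ by \eqref{P1} and $a\ingr y$, so $a\poverl y$ by \eqref{df:overl} with $y\in X$. Then $x\in X$ is immediate, completing this direction.

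\medskip

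I do not expect any genuine obstacle here: the statement is essentially a bookkeeping consequence of reflexivity \eqref{P1} and the definition of $\Sumd$, entirely parallel to the $(\Larrow)$ argument already given for Fact~\ref{fact:dfSum-dfOverl->ESums-overl}. The only point requiring the slightest care is choosing a collection $X$ for which the supplementation-style second conjunct of \eqref{df:Sum} is easy to discharge; taking $X=\{x,y\}$ (so that each part $a$ of $y$ overlaps $y\in X$ via $a$ itself) sidesteps this cleanly, and one could equally well take $X=\{x\}$ and note that any $a\ingr y=\Sumd$-sum of $\{x\}$ forces $a\poverl x$ by strong supplementation \eqref{P4'}, but the two-element choice keeps the argument self-contained and does not invoke \eqref{P4}.
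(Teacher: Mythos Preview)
Your proof is correct and follows essentially the same approach as the paper: both directions are handled identically, with the $(\Rarrow)$ direction witnessed by $X=\{y,x\}$, and your argument simply spells out the ``routine check'' that the paper leaves implicit. Your side remark about the alternative witness $X=\{x\}$ requiring \eqref{P4'} is accurate but, as you note, unnecessary given the two-element choice.
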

\begin{proof}
    ($\rarrow$) If $x\ingr y$, then take $X\defeq\{y,x\}$. It is routine to check that $y\Sumd X$.

    \smallskip

    ($\larrow$) If $X$ is such that $y\Sumd X$, then directly from \eqref{df:Sum} we have that every element of $X$ is part of $y$. In particular $x\ingr y$, as $x\in X$ by assumption.
\end{proof}

Our idea in what follows is to take the equivalence above as a \emph{definition} of the \emph{part-of} relation with the notion of \emph{sum} as primitive. That is, with $\+$ being the primitive sum, we want $\ingrs$ to behave as the well-known parthood relation. As Fact~\ref{fa:ingr=ingrsingr} demonstrates, for $\ingr$ and $\Sumd$ we have:
\begin{equation}\label{eq:equality-of-parthoods}
    \mathord{\ingr}=\mathord{\ingr_{\Sumd}}\,.
\end{equation}
Thus, one of the particular problems for $\+$ to be solved is to find appropriate axiomatization for which the part of relation $\ingrs$ will be such that:
\begin{equation}\label{eq:equality-of-sums}
    \mathord{\+}=\mathord{\+_{\ingrs}}\,.
\end{equation}
\eqref{eq:equality-of-parthoods} and \eqref{eq:equality-of-sums} together will be useful in establishing a one-to-one correspondence between part-of relations and mereological sums.

The class of axioms \eqref{P1}--\eqref{P5}, or its equivalent formulations, form a second-order axiomatization of what can be called \emph{classical mereology}. From a certain perspective, this can be called the \emph{standard} approach to mereology. For more on mereology and its meta-properties, see e.g. \citep{Varzi-M,Pietruszczak-M-eng,Cotnoir-Varzi-M}.

\section{Sum structures}\label{sec:axioms-for-sum}
In this part, we present another approach to mereology, with the mereological sum as the primitive, which has not been considered so far in the literature. Given a class of objects $M$, our starting point is a binary relation $\mathord{\+}\subseteq M\times\power(M)$ whose intended interpretation is as follows: $x\+ X$ iff $x$ is a mereological sum of the collection $X$. For the  relation, we assume the following postulates:
\begin{gather}
(\forall X\in\powerne(M))(\exists {x\in X})\,x\+ X\,,\tag{S1}\label{S1}\\
x\+ X\wedge y\+ X\rarrow x=y\,,\tag{S2}\label{S2}\\
x\+ X\wedge y\+ Y\wedge x\in Y\rarrow y\+ X\cup Y\,,\tag{S3}\label{S3}\\
\begin{split}
x\+ X&\wedge x\+ Y\wedge y\in Y\rarrow\\
&(\exists z\in X)(\exists Z,U\in\power(M))\,(z\+ Z\wedge y\+ U\wedge Z\cap U\neq\emptyset)\,.
\end{split}\tag{S4}\label{S4}
\end{gather}

Before showing more axioms for the relation $\+$, let us first explain the ideas of the principles above. Axiom \eqref{S1} states that every non-empty collection has a sum. Principle \eqref{S2} indicates that any collection $X$ of objects has at most one sum. So, \eqref{S1} and \eqref{S2} imply that any non-empty collection of objects has exactly one sum. Next, property \eqref{S3} shows that whenever $y$ is a sum of $Y$ and a collection $X$ of objects has a sum in $Y$,  the object $y$ is a sum of the augmented collection $X\cup Y$. Finally, \eqref{S4} says that if two collections $X,Y$ of objects have the same sum, then for each object in one collection, there is another object in the other collection such that the two objects are sums of collections with some common objects.

The meaning of \eqref{S3} and \eqref{S4} might be made clearer with the help of the following notions of \emph{s-part} and \emph{s-overlapping} relations (we also define \emph{s-disjointness} that will be used later):
\begin{align*}
x\ingrd y&{}\iffdef(\exists X\in\power(M))\,(y\+ X\wedge x\in X)\,,\tag{$\mathrm{df}\ingrd$}\label{df:ingr}
\\
x\soverl y&{}\iffdef(\exists X,Y\in\power(M))\,(x\+ X\wedge y\+ Y\wedge X\cap Y\neq\emptyset)\,,\tag{$\mathrm{df}\,{\soverl}$}\label{df:overls}\\
x\sext y&{}\iffdef(\forall X,Y\in\power(M))\,(x\+ X\wedge y\+ Y\rarrow X\cap Y=\emptyset)\,.\tag{$\mathrm{df}\,\mathord{\sext}$}\label{df:exts}
\end{align*}
\eqref{df:ingr} is a formal embodiment of the Le\'sniewski's idea that $x$ is part of $y$ iff there is a collection $X$ of objects such that $x$ is among $X$-es, and $y$ is a collective class of $X$-es.
The intended interpretation behind $\soverl$ is clear: $x$ and $y$ s-overlap if they sum collections that share at least one object. With the both definitions, \eqref{S4} can be equivalently reformulated in the following  more perspicuous way:
\begin{equation}
    x\+ X\wedge y\ingrs x \rarrow(\exists z\in X)\,y\overls z\,,\tag{\ref{S4}$^\circ$}\label{S4-var}
\end{equation}
which can be interpreted as follows: if $x$ sums $X$ and $y$ is an s-part of $x$, then in $X$ there is a $z$ that s-overlaps $y$.

Let us observe that
\begin{theorem}\label{th:ant-trans}
    If $\langle M,\mathord{\+}\rangle$ satisfies \eqref{S2}--\eqref{S3}, then $\ingrs$ is anti-symmetric and transitive.
\end{theorem}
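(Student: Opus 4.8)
The plan is to derive both properties straight from the definition \eqref{df:ingr} of $\ingrs$, the only ingredients being \eqref{S2} and \eqref{S3}; in each case the engine is that \eqref{S3} lets one amalgamate the two collections witnessing the two hypotheses into a single collection. Neither \eqref{S1} nor \eqref{S4} will be used.

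For transitivity I would start from $x\ingrs y$ and $y\ingrs z$, and use \eqref{df:ingr} to fix $X,Y\in\power(M)$ with $y\+ X$, $x\in X$, $z\+ Y$ and $y\in Y$. Reading \eqref{S3} as the schema $a\+ A\wedge b\+ B\wedge a\in B\rarrow b\+ A\cup B$ and instantiating $a:=y$, $A:=X$, $b:=z$, $B:=Y$ (so that the side condition $a\in B$ is just $y\in Y$), I would get $z\+ X\cup Y$; since $x\in X\subseteq X\cup Y$, the collection $X\cup Y$ witnesses $x\ingrs z$ via \eqref{df:ingr}.

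For anti-symmetry I would start from $x\ingrs y$ and $y\ingrs x$, fixing $X,Y\in\power(M)$ with $y\+ X$, $x\in X$, $x\+ Y$ and $y\in Y$. Applying \eqref{S3} to $y\+ X$, $x\+ Y$, $y\in Y$ yields $x\+ X\cup Y$, and applying it to $x\+ Y$, $y\+ X$, $x\in X$ yields $y\+ Y\cup X$. Because $X\cup Y=Y\cup X$, the elements $x$ and $y$ are both sums of one and the same collection, and \eqref{S2} then forces $x=y$.

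I do not expect a real obstacle. The only points needing care are lining up the bound variables of \eqref{S3} in the correct roles across the three applications, and observing that in the anti-symmetry step it is precisely the commutativity of $\cup$ that brings the two conclusions under the single hypothesis of \eqref{S2}.
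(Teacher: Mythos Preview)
Your proposal is correct and matches the paper's own proof essentially line for line: both arguments unpack \eqref{df:ingr}, apply \eqref{S3} to merge the two witnessing collections (once for transitivity, twice for anti-symmetry), and then invoke \eqref{S2} to conclude $x=y$. The only differences are cosmetic---variable names and the order in which the two properties are treated.
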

\begin{proof}
For anti-symmetry, assume that $x\ingrs y$ and $y\ingrs x$.  Thus there are sets $Y$ and $X$ such that $x\+ X$ and $y\in X$, and $y\+Y$ and $x\in Y$. By \eqref{S3} it is the case that $x\+ X\cup Y$ and $y\+X\cup Y$, so $x=y$ by \eqref{S2}.

For transitivity, let $x\ingrs y$ and $y\ingrs z$. So, by \eqref{df:ingr} there are sets $Y$ and $Z$ such that $y\+ Y$ and $x\in Y$, and $z\+ Z$ and $y\in Z$. By \eqref{S3} we get that $z\+ Z\cup Y$, and since $x\in Z\cup Y$ we get that $x\ingrs z$, which ends the proof.
\end{proof}

\subsection{The fifth axiom for \texorpdfstring{$\+$}{}}

Let us now introduce our final axiom for $\+$. For any $x\in M$ let:
\begin{equation}\tag{$\mathrm{df}\,\Ingrs$}\label{df:Ingrs}
\Ingrs(x)\defeq\{y\in M\mid y\ingrs x\}
\end{equation}
and   for any subset $A$ of the domain $M$ let:
\[
\Ingrs(A)\defeq\bigcup_{a\in A}\Ingrs(a)\,.
\]
Let us call a set $A\in\power(M)$ \emph{pre-dense} in a set $B\in\power(M)$ iff for every $b\in B$ there is $a\in A$ such that $a\soverl b$. Thus, our next axiom is:
\begin{equation}\tag{S5}\label{S5}
\text{$X$ is pre-dense in $\Ingrs(x)$}\rarrow x\+\Ingrs(x)\cap\Ingrs(X)\,.
\end{equation}
stating that if every $s$-part of $x$ $s$-overlaps some object of $X$, then $x$ is a sum of the collection consisting of the common $s$-parts of objects in $X$ and the $s$-parts of $x$. On a less formal note, if $X$ is pre-dense in $\Ingrs(x)$, then $\Ingrs(X)$ has enough s-parts of $x$ to add up to~$x$.


We write $\mathbf{S}$ for the class of structures $\langle M,\mathord{\+}\rangle$  satisfying all the axioms \eqref{S1}-\eqref{S5}. In the remainder of the article, we will mainly work with the class ${\bf S}$, and as we shall see, the axioms are desirable in that they are a precise characterization of classical mereology, but now with a new primitive $\+$.

\begin{lemma}\label{lem:sum-of-own-parts}
    If $\langle M,\+\rangle\in\bfS$, then for every $y\in M$, $\{y\}$ is pre-dense in $\Ingrs(y)$, so for every $y\in M$, $y\+\Ingrs(y)$.
\end{lemma}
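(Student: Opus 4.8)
The plan is to prove the pre-density claim first and then obtain the sum statement by a single application of \eqref{S5}.

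The key preliminary observation is that every element is the sum of its own singleton. Indeed, applying \eqref{S1} to the set $\{z\}\in\powerne(M)$ yields some $x\in\{z\}$ with $x\+\{z\}$, which forces $z\+\{z\}$. (Here \eqref{S1} alone suffices; \eqref{S2} would only be needed if we wanted uniqueness.) In particular, via \eqref{df:ingr} with witness $\{z\}$ we get $z\ingrs z$, so $z\in\Ingrs(z)$ and $\Ingrs(z)$ is non-empty. This singleton-sum fact is exactly what makes the required s-overlaps available.

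Next I would verify that $\{y\}$ is pre-dense in $\Ingrs(y)$. Unwinding the definition of pre-density, since the only member of $\{y\}$ is $y$ itself, it suffices to show that $y\soverl b$ whenever $b\in\Ingrs(y)$, i.e.\ whenever $b\ingrs y$. From $b\ingrs y$ and \eqref{df:ingr} we obtain a set $X$ with $y\+X$ and $b\in X$; from the preliminary observation, $b\+\{b\}$. Since $b\in X\cap\{b\}$, the pair $X,\{b\}$ witnesses $y\soverl b$ by \eqref{df:overls}. Hence $\{y\}$ is pre-dense in $\Ingrs(y)$.

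Finally, instantiating \eqref{S5} with $X:=\{y\}$ and using the pre-density just established gives $y\+\Ingrs(y)\cap\Ingrs(\{y\})$. Since $\Ingrs(\{y\})=\bigcup_{a\in\{y\}}\Ingrs(a)=\Ingrs(y)$, the intersection collapses to $\Ingrs(y)$, so $y\+\Ingrs(y)$, as desired. There is no substantial obstacle here: the only points requiring attention are noticing that \eqref{S1} already delivers $z\+\{z\}$ for every $z$ (which supplies the s-overlap witnesses) and correctly computing $\Ingrs(\{y\})=\Ingrs(y)$ so that \eqref{S5} yields precisely $y\+\Ingrs(y)$; everything else is routine unfolding of definitions.
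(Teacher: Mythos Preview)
Your argument hinges on reading \eqref{S1} literally as $(\forall X\in\powerne(M))(\exists x\in X)\,x\+X$, from which $z\+\{z\}$ is immediate. However, this is almost certainly a typo for $(\exists x\in M)$: with the $\exists x\in X$ reading, \eqref{S1} would fail in every mereological structure with two disjoint elements (their $\Sumd$-sum is in $M$ but not in the pair), contradicting Theorem~\ref{th:sum-induced-from-ingr-satisfies-sum-axioms}; and the paper itself derives $x\+\{x\}$ only \emph{after} this lemma, via Theorem~\ref{th:WSP}, precisely because \eqref{S1} alone does not supply it. Under the intended reading, your preliminary observation collapses: \eqref{S1} applied to $\{z\}$ yields only \emph{some} $w\in M$ with $w\+\{z\}$, and to conclude $w=z$ you would need \eqref{eq:sum-singleton}, whose proof uses the very lemma you are trying to establish. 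So the argument is circular.

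The paper avoids this trap by not appealing to $a\+\{a\}$ at all. Given $a\in\Ingrs(y)$, so $y\+A$ with $a\in A$, it applies \eqref{S4} with $X=Y=A$ to extract some $b\in A$ and sets $Z,U$ with $b\+Z$, $a\+U$, $Z\cap U\neq\emptyset$; in particular $\Ingrs(a)\neq\emptyset$. Transitivity (Theorem~\ref{th:ant-trans}) then gives $\Ingrs(a)\subseteq\Ingrs(y)$, hence $\Ingrs(a)\cap\Ingrs(y)\neq\emptyset$, which is the required $a\soverl y$. Your final step---instantiating \eqref{S5} with $X=\{y\}$ and collapsing $\Ingrs(\{y\})$ to $\Ingrs(y)$---is fine and matches the paper; the gap is solely in how you obtain the overlap witness.
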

\begin{proof}
    Let $a\in\Ingrs(y)$, i.e., there is $A\in\power(M)$ such that $y\+ A$ and $a\in A$. By \eqref{S4} there is $b\in A$ such that $\Ingrs(b)\cap\Ingrs(a)\neq\emptyset$, and so $\Ingrs(a)\neq\emptyset$. But by Theorem~\ref{th:ant-trans} it is the case that $\Ingrs(a)\subseteq\Ingrs(y)$, so $\Ingrs(a)\cap\Ingrs(y)\neq\emptyset$. This shows that $\{y\}$ is pre-dense in $\Ingrs(y)$. Applying now \eqref{S5} we get that $y\+\Ingrs(y)\cap\Ingrs(\{y\})$, which means that $y\+\Ingrs(y)$.
\end{proof}
With the help of the lemma we can prove a condition that is closely related to the well-known \emph{weak supplementation principle}.\footnote{See \citep[Lemma 4.1.(iii)]{Pietruszczak-M-eng}.}
\begin{theorem}\label{th:WSP}
    If $\langle M,\+\rangle\in\bfS$, then\/\textup{:}
    \begin{equation}\label{eq:sum-singleton}
        x\+\{y\}\rarrow x=y\,.
    \end{equation}
\end{theorem}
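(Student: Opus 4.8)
The plan is to obtain this as an immediate consequence of the existence and uniqueness of sums, without invoking the machinery of Lemma~\ref{lem:sum-of-own-parts} or axioms \eqref{S3}--\eqref{S5}. First I would apply \eqref{S1} to the non-empty set $\{y\}\in\powerne(M)$: it asserts that \emph{some} element of $\{y\}$ is a sum of $\{y\}$, and since $y$ is the only element of $\{y\}$, this element must be $y$ itself, i.e. $y\+\{y\}$. By hypothesis we also have $x\+\{y\}$, so the two objects $x$ and $y$ are both sums of the same collection $\{y\}$. Applying \eqref{S2} then yields $x=y$, which is exactly the desired conclusion.

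The argument has essentially no obstacle; the one thing worth pausing on is that \eqref{S1}, even though it only guarantees a sum \emph{lying inside} the summed collection, already pins down the sum of a singleton completely, after which \eqref{S2} closes the matter. A slightly longer alternative would route through \eqref{df:ingr} to record $y\ingrs x$ from $x\+\{y\}$, then argue $x\ingrs y$ and finish by the anti-symmetry half of Theorem~\ref{th:ant-trans}; but establishing $x\ingrs y$ directly is more work than the two-line \eqref{S1}/\eqref{S2} route, so I would not take that path. Since \eqref{eq:sum-singleton} is the singleton instance of the idea behind weak supplementation (a thing can only be the sum of a collection whose members exhaust it), it is reassuring that it already falls out of the first two axioms alone.
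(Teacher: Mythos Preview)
Your argument hinges on reading \eqref{S1} as asserting that the sum of $X$ lies \emph{inside} $X$, i.e., $(\exists x\in X)\,x\+X$. Although that is what the displayed formula literally says, it is a typo: the intended axiom is $(\forall X\in\powerne(M))(\exists x\in M)\,x\+X$. Several pieces of internal evidence force this reading. First, the paper's own gloss on \eqref{S1} is simply ``every non-empty collection has a sum,'' with no claim that the sum belongs to the collection. Second, Theorem~\ref{th:sum-induced-from-ingr-satisfies-sum-axioms} asserts that \eqref{S1} holds for $\Sumd$ in every mereological structure, which is plainly false under the literal reading (two disjoint atoms have a sum that is neither of them). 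Third, immediately after Theorem~\ref{th:WSP} the paper derives \eqref{eq:S-singleton}, i.e., $x\+\{x\}$, \emph{as a corollary of} the theorem; under your reading of \eqref{S1} this would be immediate and the detour would be pointless. Finally, the independence model for \eqref{S3} in Figure~\ref{fig:for-S3} is claimed to satisfy \eqref{S1}, yet there the $\Sumd$-sum of $\{4,6\}$ is $2\notin\{4,6\}$.

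With the intended \eqref{S1}, your argument collapses: applying it to $\{y\}$ yields only some $z\in M$ with $z\+\{y\}$, and \eqref{S2} then gives $x=z$, but nothing forces $z=y$. Axioms \eqref{S1} and \eqref{S2} alone do not suffice: take $M=\{a,b\}$ with $a\+\{a\}$, $a\+\{b\}$, $a\+\{a,b\}$ and nothing else; both axioms hold, yet $a\+\{b\}$ while $a\neq b$. The paper's proof genuinely needs the heavier machinery you set aside: it uses \eqref{S4} to show $\{y\}$ is pre-dense in $\Ingrs(x)$, applies \eqref{S5} and transitivity to obtain $x\+\Ingrs(y)$, and then matches this against $y\+\Ingrs(y)$ from Lemma~\ref{lem:sum-of-own-parts} to conclude via \eqref{S2}.
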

\begin{proof}
Suppose $x\+\{y\}$. If $z\in\Ingrs(x)$, then by \eqref{S4} it is the case that $z\overls y$, so $\{y\}$ is pre-dense in $\Ingrs(x)$. Thus, by \eqref{S5} $x\+\Ingrs(x)\cap\Ingrs(y)$. However, by the assumption $y\ingrs x$, so by the transitivity of $\ingrs$ we have that $\Ingrs(y)\subseteq\Ingrs(x)$, and so $x\+\Ingrs(y)$. By Lemma~\ref{lem:sum-of-own-parts} it is the case that $y\+\Ingrs(y)$, and so by \eqref{S2} $x=y$.
\end{proof}

Since for any $x\in M$ by \eqref{S1} there is a $y$ such that $y\+\{x\}$, by Theorem~\ref{th:WSP} we get that:
\begin{equation}\label{eq:S-singleton}
x\+ \{x\}\,,
\end{equation}
and so it follows that
\begin{theorem}\label{th:partial-order}
    The relation $\ingrs$ is reflexive, and so it is a~partial order by Theorem~\ref{th:ant-trans}.
\end{theorem}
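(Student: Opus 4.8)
The plan is very short, since almost all the work has already been done. Reflexivity is the only new thing to verify, and transitivity plus anti-symmetry are handed to us by Theorem~\ref{th:ant-trans}, which only needs \eqref{S2}--\eqref{S3} and hence applies to every structure in $\bfS$.

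First I would invoke \eqref{eq:S-singleton}, which says $x\+\{x\}$ for every $x\in M$; this in turn rested on \eqref{S1} (to produce some $y$ with $y\+\{x\}$) together with Theorem~\ref{th:WSP} (to collapse that $y$ to $x$). Then I would simply unwind the definition \eqref{df:ingr}: to obtain $x\ingrs x$ it suffices to exhibit a set $X\in\power(M)$ with $x\+X$ and $x\in X$, and $X\defeq\{x\}$ does the job by \eqref{eq:S-singleton}. Hence $\ingrs$ is reflexive.

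Finally I would combine this with Theorem~\ref{th:ant-trans}: that theorem already gives anti-symmetry and transitivity of $\ingrs$ on any structure satisfying \eqref{S2}--\eqref{S3}, so together with reflexivity we conclude that $\ingrs$ is a partial order on $M$. There is no real obstacle here — the only thing worth stating explicitly is the bookkeeping observation that every structure in $\bfS$ satisfies the hypotheses of all of \eqref{eq:S-singleton}, Theorem~\ref{th:WSP}, and Theorem~\ref{th:ant-trans}, so the chain of implications goes through without further assumptions.
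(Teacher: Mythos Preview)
Your proposal is correct and follows exactly the paper's own reasoning: the paper derives \eqref{eq:S-singleton} from \eqref{S1} and Theorem~\ref{th:WSP}, then immediately states Theorem~\ref{th:partial-order} as a consequence, with the reflexivity step (taking $X\defeq\{x\}$ in \eqref{df:ingr}) left implicit. You have simply made that implicit step explicit, which is fine.
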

We also obtain that
\begin{equation}\label{eq:Ingrs-not-empty}
(\forall x\in M)\,\Ingrs(x)\neq\emptyset\,.
\end{equation}
\begin{corollary}
    If $\langle M,\+\rangle\in\bfS$, then for no $x\in M$, $x\+\emptyset$.
\end{corollary}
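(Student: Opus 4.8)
I would argue by \emph{reductio}. Suppose, contrary to the claim, that $x\+\emptyset$ for some $x\in M$. The idea is to play this putative sum of the empty collection off against the sum of the singleton $\{x\}$, which we already have in hand.

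The key ingredient is \eqref{eq:S-singleton}, established just above: $x\+\{x\}$. Now I would feed the two facts $x\+\emptyset$ and $x\+\{x\}$ into \eqref{S4}, instantiating its ``$X$'' with $\emptyset$, its ``$Y$'' with $\{x\}$, and its ``$y$'' with $x$ (which is legitimate since $x\in\{x\}$). All three conjuncts of the antecedent of \eqref{S4} are then met, so its consequent must hold; but that consequent begins with $(\exists z\in X)$, i.e.\ $(\exists z\in\emptyset)$, which is impossible. This contradiction finishes the argument.

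\textbf{Main obstacle.} There really is no obstacle here: the proof is a single application of \eqref{S4}, and the only thing to be careful about is that it genuinely uses \eqref{eq:S-singleton} (hence, indirectly, \eqref{S5} and Theorem~\ref{th:WSP}) rather than just \eqref{S1}--\eqref{S4} — axiom \eqref{S1} by itself only speaks of non-empty collections and so does not directly rule out a sum of $\emptyset$. An alternative derivation would combine \eqref{S1} (applied to $\{x\}$) and \eqref{S2} to get $x\+\{x\}$ and then proceed as above, but this is merely a repackaging of \eqref{eq:S-singleton}. I would therefore present the short \emph{reductio} via \eqref{S4} as above.
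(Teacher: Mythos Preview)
Your proof is correct and essentially identical to the paper's: assume $x\+\emptyset$, invoke \eqref{eq:S-singleton} to get $x\+\{x\}$, and apply \eqref{S4} with $X=\emptyset$, $Y=\{x\}$, $y=x$ to obtain an element of $\emptyset$. One quibble about your side remark: the ``alternative'' via \eqref{S1} and \eqref{S2} alone does not actually yield $x\+\{x\}$---\eqref{S1} only gives $y\+\{x\}$ for \emph{some} $y$, and \eqref{S2} cannot by itself force $y=x$; you still need Theorem~\ref{th:WSP} (hence \eqref{S5}), so this is not merely a repackaging but the very same route.
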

\begin{proof}
    Assume that $x\+ \emptyset$. By \eqref{eq:S-singleton} we have that $x\+\{x\}$, and so \eqref{S4} entails that there is a $z\in\emptyset$, a contradiction.
\end{proof}

\begin{lemma}\label{lem:aux-for-next}
    For any $\langle M,\mathord{\+}\rangle\in\mathbf{S}$\/\textup{:}
    \begin{enumerate}[(1)]
        \item\label{item-1-lem:aux-for-next} $X\subseteq\Ingrs(x)\rarrow\Ingrs(X)\subseteq\Ingrs(x)$\,,
        \item $x\+ X\rarrow x\+\Ingrs(x)\cap\Ingrs(X)$\,,
        \item $x\+ X\rarrow\Ingrs(X)\subseteq\Ingrs(x)$\,,
        \item\label{item-4-lem:aux-for-next} $x\+X\iff x\+\Ingrs(X)$\,.
    \end{enumerate}
\end{lemma}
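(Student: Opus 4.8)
The plan is to prove the four items in the order given, since each relies on the previous ones and on the machinery already established (Theorem~\ref{th:ant-trans}, Lemma~\ref{lem:sum-of-own-parts}, Theorem~\ref{th:WSP}, and the axioms).

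\medskip

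\noindent\textbf{Item \eqref{item-1-lem:aux-for-next}.} This is pure bookkeeping with the transitivity of $\ingrs$ (Theorem~\ref{th:partial-order}). If $X\subseteq\Ingrs(x)$ and $z\in\Ingrs(X)$, then $z\ingrs a$ for some $a\in X$, and $a\ingrs x$ since $a\in X\subseteq\Ingrs(x)$; transitivity gives $z\ingrs x$, i.e. $z\in\Ingrs(x)$.

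\medskip

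\noindent\textbf{Item (2).} Assume $x\+ X$. The idea is to apply \eqref{S5}, so I must check that $X$ is pre-dense in $\Ingrs(x)$. Take $a\in\Ingrs(x)$, i.e. $a\ingrs x$. By \eqref{df:ingr} there is a witnessing set — but more directly, $x\+ X$ together with the variant \eqref{S4-var} (or \eqref{S4} applied with both collections equal to the one witnessing $a\ingrs x$ and $X$) yields a $z\in X$ with $a\soverl z$. This is exactly pre-density, so \eqref{S5} gives $x\+\Ingrs(x)\cap\Ingrs(X)$.

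\medskip

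\noindent\textbf{Item (3).} This follows by combining (2) and \eqref{item-1-lem:aux-for-next}. From (2), $x\+\Ingrs(x)\cap\Ingrs(X)$; by \eqref{df:ingr}, every element of $\Ingrs(x)\cap\Ingrs(X)$ is an $s$-part of $x$, so $\Ingrs(x)\cap\Ingrs(X)\subseteq\Ingrs(x)$. But I actually want $\Ingrs(X)\subseteq\Ingrs(x)$; for this, note that if $x\+ X$ then every element of $X$ is an $s$-part of $x$ by \eqref{df:ingr}, so $X\subseteq\Ingrs(x)$, and then \eqref{item-1-lem:aux-for-next} gives $\Ingrs(X)\subseteq\Ingrs(x)$ outright. (Item (2) is not even needed here.)

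\medskip

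\noindent\textbf{Item \eqref{item-4-lem:aux-for-next}.} The forward direction is the heart of the matter. From $x\+ X$, item (3) gives $\Ingrs(X)\subseteq\Ingrs(x)$, so $\Ingrs(x)\cap\Ingrs(X)=\Ingrs(X)$, and then item (2) reads $x\+\Ingrs(X)$. For the converse, assume $x\+\Ingrs(X)$. Let $y$ be the unique sum of $X$ guaranteed by \eqref{S1}--\eqref{S2} (here one must note $X\neq\emptyset$, else $\Ingrs(X)=\emptyset$ and the Corollary above rules out $x\+\emptyset$; so $X$ is non-empty). By the forward direction applied to $y\+ X$ we get $y\+\Ingrs(X)$, and then \eqref{S2} forces $x=y$, hence $x\+ X$. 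The main obstacle is getting the pre-density check in item (2) exactly right — i.e. extracting, for an arbitrary $s$-part $a$ of $x$, a witness $z\in X$ with $a\soverl z$ — but this is precisely what \eqref{S4} (in the form \eqref{S4-var}) delivers once one unwinds $a\ingrs x$ via \eqref{df:ingr}. Everything else is assembling transitivity, \eqref{df:ingr}, and \eqref{S2}.
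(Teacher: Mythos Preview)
Your proposal is correct and follows essentially the same route as the paper: item~(1) by transitivity of $\ingrs$; item~(2) by verifying pre-density of $X$ in $\Ingrs(x)$ via \eqref{S4-var} and then invoking \eqref{S5}; item~(3) directly from $X\subseteq\Ingrs(x)$ plus item~(1); and item~(4) forward from (2)+(3), backward by noting $X\neq\emptyset$, taking the \eqref{S1}-sum of $X$, applying the forward direction to it, and concluding with \eqref{S2}. The brief detour at the start of your item~(3) is unnecessary (as you yourself note), but the argument you settle on is exactly the paper's.
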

\begin{proof}
    (1) Let $X\subseteq\Ingrs(x)$ and pick $a\in\Ingrs(X)$. So there is a $x_0\in X$ such that $a\ingrs x_0$. By the assumption, we have that $x_0\ingrs x$, so also $a\ingrs x$ by Theorem~\ref{th:ant-trans}. Thus $a\in\Ingrs(x)$.

    \smallskip

    (2)  Assume that $x\+ X$. By \eqref{eq:Ingrs-not-empty} we have that $\Ingrs(x)\neq\emptyset$. Pick an arbitrary element $y$ from $\Ingrs(x)$, i.e., there is $Y\in\power(M)$ such that $x\+Y$ and $y\in Y$. Applying \eqref{S4-var} to $x\+ X$, $x\+Y$ and $y\in Y$, we conclude that there is a $u\in X$ such that $u\overls y$, so $X$ is pre-dense in $\Ingrs(x)$. By \eqref{S5} it holds that $x\+\Ingrs(x)\cap\Ingrs(X)$, as required.

    \smallskip

    (3) Let $x\+ X$ and $y\in\Ingrs(X)$. So there is $x_0\in X$ such that $y\ingrs x_0$. But $x_0\ingrs x$, so by Theorem~\ref{th:ant-trans} we get that $y\in\Ingrs(x)$.

    \smallskip

    (4) ($\rarrow$) This part follows from (2) and (3).

 \smallskip

($\larrow$) Assume that $x\+\Ingrs(X)$. Thus $\Ingrs(X)\neq\emptyset$, and so $X\neq\emptyset$. Therefore by \eqref{S1} there is a $x_0\in M$ such that $x_0\+ X$. From the first part of the proof of this point, we get that $x_0\+\Ingrs(X)$, and so $x=x_0$ by \eqref{S2}.
\end{proof}

Observe that \eqref{S5} and Lemma \ref{lem:aux-for-next} \eqref{item-1-lem:aux-for-next} imply that:
\begin{equation}\label{eq:S5-var}
\text{$X$ is pre-dense in $\Ingrs(x)$}\wedge X\subseteq\Ingrs(x)\rarrow x\+ X\,.
\end{equation}

\subsection{The sigma operation}\label{sub:sigma}

Given a sum structure $\langle M,\mathord{\+}\rangle$ with every $x\in M$ we can associate the family of all collections $X$ such that $x$ sums $X$. Formally, we define an operation $\Sigmas\colon M\to\power(\power(M))$ such that
\begin{equation}\tag{$\mathrm{df}\,\Sigmas$}
    \Sigmas(x)\defeq\{X\in\power(M)\mid x\+ X\}\,.
\end{equation}

With the axioms introduced so far we can prove that there is no sum of the empty collection:
\begin{equation}\label{eq:no-sum-of-empty}
    (\nexists x\in M)\,x\+\emptyset\qquad\text{so}\qquad(\nexists x\in M)\,\emptyset\in\Sigmas(x)\,.
\end{equation}

It holds that:
\begin{theorem}\label{th:partition}
For any $\langle M,\+\rangle\in\bfS$, the family $\{\Sigmas(x)\mid x\in M\}$ is a partition of $\powerne(M)$.
\end{theorem}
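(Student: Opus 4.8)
The plan is to show that $\{\Sigmas(x)\mid x\in M\}$ is a family of pairwise disjoint non-empty sets whose union is exactly $\powerne(M)$. I would organize the argument around three claims: (i) each $\Sigmas(x)$ is non-empty; (ii) distinct elements of $M$ give disjoint families, i.e. $\Sigmas(x)\cap\Sigmas(y)\neq\emptyset$ implies $x=y$; and (iii) $\bigcup_{x\in M}\Sigmas(x)=\powerne(M)$, which itself splits into the inclusion $\bigcup_{x\in M}\Sigmas(x)\subseteq\powerne(M)$ and the reverse inclusion.

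For (i), note that by \eqref{eq:S-singleton} we have $x\+\{x\}$, so $\{x\}\in\Sigmas(x)$ and hence $\Sigmas(x)\neq\emptyset$. For (ii), suppose $X\in\Sigmas(x)\cap\Sigmas(y)$; then $x\+ X$ and $y\+ X$, so \eqref{S2} immediately yields $x=y$ — this also shows the families in the collection are genuinely indexed bijectively, but for the partition claim the disjointness is what matters. For (iii), the left-to-right inclusion: if $X\in\Sigmas(x)$ for some $x$, then $x\+ X$, and by \eqref{eq:no-sum-of-empty} we cannot have $X=\emptyset$, so $X\in\powerne(M)$. The right-to-left inclusion: given $X\in\powerne(M)$, axiom \eqref{S1} supplies an $x\in X$ with $x\+ X$, so $X\in\Sigmas(x)\subseteq\bigcup_{x\in M}\Sigmas(x)$.

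Putting these together: every member of the collection is non-empty by (i), the members are pairwise disjoint by (ii), and they cover $\powerne(M)$ and nothing more by (iii); this is precisely the assertion that $\{\Sigmas(x)\mid x\in M\}$ partitions $\powerne(M)$. The argument is essentially a direct unwinding of the definitions together with \eqref{S1}, \eqref{S2}, and the already-established facts \eqref{eq:S-singleton} and \eqref{eq:no-sum-of-empty}; there is no real obstacle here. If any step deserves a second glance it is making sure we are partitioning $\powerne(M)$ rather than $\power(M)$ — the empty set must be excluded, and that is exactly what \eqref{eq:no-sum-of-empty} guarantees, so the cover in (iii) lands on non-empty subsets only.
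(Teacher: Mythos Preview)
Your proof is correct and follows essentially the same approach as the paper's own argument: non-emptiness via \eqref{eq:S-singleton}, the covering equality via \eqref{S1} together with \eqref{eq:no-sum-of-empty}, and disjointness via \eqref{S2}. The only cosmetic difference is that the paper phrases the disjointness conclusion as $\Sigmas(x)\cap\Sigmas(y)\neq\emptyset\rarrow\Sigmas(x)=\Sigmas(y)$ rather than $x=y$, and does not explicitly split the covering into two inclusions; the content is identical.
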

\begin{proof}
Firstly, \eqref{eq:S-singleton} entails that
\begin{equation}\label{eq:Sigmas-non-empty}
(\forall x\in M)\,\{x\}\in\Sigmas(x)\qquad\text{and so}\qquad(\forall x\in M)\,\Sigmas(x)\neq\emptyset\,.
\end{equation}

Secondly, by \eqref{S1} and \eqref{eq:no-sum-of-empty} we have that
    \begin{equation}
        \powerne(M)=\bigcup_{x\in M}\Sigmas(x)\,.
    \end{equation}

Thirdly, by \eqref{S2} we have that
    \begin{equation}
        \Sigmas(x)\cap\Sigmas(y)\neq\emptyset\rarrow\Sigmas(x)=\Sigmas(y)\,.\qedhere
    \end{equation}
\end{proof}

It is routine to verify that:
\[
\Ingrs(x)=\bigcup\Sigmas(x)\,.
\]

Intuitively, if we have a family of sets such that $x\in M$ is the sum of each set from the family, then taking all elements from the sets together should give us nothing more than~$x$:
\begin{equation}\tag{S$\Sigma$}\label{SS}
    \emptyset\neq\calA\subseteq\Sigmas(x)\rarrow\bigcup\calA\in\Sigmas(x)\,.
\end{equation}
Indeed, we have
\begin{theorem}
    \eqref{SS} holds in every sum structure.
\end{theorem}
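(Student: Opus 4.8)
The plan is to derive $x\+\bigcup\calA$ from the variant \eqref{eq:S5-var} of the fifth axiom; for this I must check that $\bigcup\calA\subseteq\Ingrs(x)$ and that $\bigcup\calA$ is pre-dense in $\Ingrs(x)$.

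First I would record the inclusion: for every $A\in\calA$ we have $x\+ A$, hence by \eqref{df:ingr} every member of $A$ is an $s$-part of $x$, i.e.\ $A\subseteq\Ingrs(x)$, and taking the union over $\calA$ gives $\bigcup\calA\subseteq\Ingrs(x)$. (This also shows $\bigcup\calA\neq\emptyset$, since $\calA\neq\emptyset$ and, by \eqref{eq:no-sum-of-empty}, no member of $\calA$ is empty, though this will not be needed.)

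Next I would verify pre-density using a single member of the family. Fix some $A\in\calA$, which exists because $\calA\neq\emptyset$; then $x\+ A$. Given $b\in\Ingrs(x)$, i.e.\ $b\ingrs x$, an application of \eqref{S4-var} to $x\+ A$ and $b\ingrs x$ yields some $z\in A$ that $s$-overlaps $b$; since $z\in A\subseteq\bigcup\calA$, this witnesses pre-density of $\bigcup\calA$ in $\Ingrs(x)$. Having both hypotheses of \eqref{eq:S5-var} at hand, I conclude $x\+\bigcup\calA$, that is, $\bigcup\calA\in\Sigmas(x)$.

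I expect no genuine obstacle: the statement amounts to assembling \eqref{df:ingr}, \eqref{S4-var} and \eqref{eq:S5-var}. The only point needing a moment's care is the observation that a \emph{single} set $A\in\calA$ already supplies, via \eqref{S4-var}, an $s$-overlapping witness inside $\bigcup\calA$ for every $s$-part of $x$, so pre-density of the whole union is immediate and no argument ranging over all of $\calA$ is required. (One could instead first prove $x\+\Ingrs(\bigcup\calA)$ by the same reasoning and then apply Lemma~\ref{lem:aux-for-next}\eqref{item-4-lem:aux-for-next}, but the direct route via \eqref{eq:S5-var} is shorter.)
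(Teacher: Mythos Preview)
Your proposal is correct and follows essentially the same route as the paper: you verify $\bigcup\calA\subseteq\Ingrs(x)$ directly from \eqref{df:ingr}, pick a single $A\in\calA$ and use \eqref{S4-var} (the paper uses the unabbreviated \eqref{S4}) to obtain pre-density of $\bigcup\calA$ in $\Ingrs(x)$, and then conclude by \eqref{eq:S5-var}. The structure, the chosen lemmas, and the key observation that one member of $\calA$ suffices for pre-density all match the paper's proof.
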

\begin{proof}
Assume that $\emptyset\neq\calA\subseteq\Sigmas(x)$.   If $a\in \bigcup\calA$, then there is a set $A\in\calA$ such that $a\in A$. Since $x\+A$, we have that $a\in\Ingrs(x)$. So (a) $\bigcup\calA\subseteq\Ingrs(x)$.

Next, let $z\in \Ingrs(x)$. Then, there is some $Z\in\power(M)$ such that $x\+Z$ and $z\in Z$. Note that $\calA\not=\emptyset$, and let $Y\in\calA$. Therefore, $x\+Y$. Applying \eqref{S4} to $x\+Y$, $x\+Z$ and $z\in Z$ we conclude that there is some $y\in Y$ such that $\Ingrs(y)\cap\Ingrs(z)\not=\emptyset$. It is easy to see that $y\in\bigcup\calA$. Therefore, $\bigcup\calA$ is is pre-dense in $\Ingrs(x)$ and so by (a) and \eqref{eq:S5-var} we obtain that $x\+\bigcup\calA$.
\end{proof}

\eqref{SS} is an elegant and <<natural>> postulate about the sum relation. However, it is strictly weaker than \eqref{S5}, as can be seen in Figure~\ref{fig:S5}:\footnote{Diagrams for the sum relation are to be interpreted in the following obvious way: a dashed arrow from an object $x$ to a set $X$ indicates that $x$ is a sum of $X$.} it is simple to check that  $\{b\}$ is pre-dense in $\Ingrs(a)$ and that $\Ingrs(a)\cap\Ingrs(b)=\{b\}$, but $a\+\{b\}$ fails, which violates the axiom.  Even \eqref{SS} and \eqref{eq:sum-singleton} together cannot replace \eqref{S5} as can be seen in the same model.

\begin{figure}
    \centering
\begin{tikzpicture}
\node (a) [circle,draw,inner sep=0pt,minimum size=1mm,fill=black] at (0,0) [label=below:$a$]{};
\node (b) [circle,draw,inner sep=0pt,minimum size=1mm,fill=black] at (3,0) [label=below:$b$]{};
\node (a1) at (-1,1.5) {$\{a\}$};
\node (a2) at (1,1.5) {$\{a,b\}$};
\node (b1) at (3,1.5) {$\{b\}$};
\draw[dashed,->] (a) to (a1);
\draw[dashed,->] (a) to (a2);
\draw[dashed,->] (b) to (b1);

\end{tikzpicture}
    \caption{A two-element structure $\langle M,\mathord{\+}\rangle$, in which \eqref{S1}--\eqref{S4}, \eqref{SS} and \eqref{eq:sum-singleton} all hold but \eqref{S5} fails.}
    \label{fig:S5}
\end{figure}

\subsection{The equivalence of derived notions}

By means of \eqref{df:ingr} we introduced a derived notion of an \emph{s-part} that---as we have seen in Theorem~\ref{th:partial-order}---is a partial order.  By means of it, we can define the standard notions of overlapping and disjointness:
\begin{align}
x\overl_{\ingrd} y&\:\iffdef\exists_{z\in M}(z\ingrd x\wedge z\ingrd y)\,,\tag{$\mathrm{df}\,\mathord{\overl_{\ingrd}}$}\label{df:overl_ingrd}\\
x\ext_{\ingrd} y&\:\iffdef\neg\exists_{z\in M}(z\ingrd x\wedge z\ingrd y)\,,\tag{$\mathrm{df}\,\mathord{\ext_{\ingrd}}$}\label{df:ext_ingrd}
\end{align}
and the notion of \emph{mereological sum} with respect to $\ingrs$:
\begin{equation*}
    x\Sums X\iffdef(\forall y\in X)\,y\ingrs x\wedge(\forall z\in M)\,(z\ingrs x\rarrow(\exists y\in X)\,y\overl_{\ingrd}z)\,.
\end{equation*}
The above is of course nothing but \eqref{df:Sum} with $\ingrs$ in place of the unspecified part-of relation.

It is easy to see that:
\begin{fact}\label{fact:overl-via-sum}
For any $\langle M,\mathord{\+}\rangle$, it holds that \/\textup{:}
\begin{align}
    x\soverl y &{}\iff\Ingrs(x)\cap\Ingrs(y)\neq\emptyset\iff x\overl_{\ingrs} y\,,\\
    x\sext y &{}\iff\Ingrs(x)\cap\Ingrs(y)=\emptyset\iff x\ext_{\ingrs} y\,.
\end{align}
\end{fact}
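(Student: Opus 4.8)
The plan is to prove the two chains of equivalences by unpacking definitions and using the basic properties of $\+$ established above. I would treat the first biconditional chain in detail and remark that the second is entirely parallel, working with complements/negations.

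\medskip

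\noindent\textbf{First equivalence.} For $x\soverl y\iff\Ingrs(x)\cap\Ingrs(y)\neq\emptyset$: the left-to-right direction starts from witnesses $X,Y$ with $x\+ X$, $y\+ Y$ and some $z\in X\cap Y$; then $z\in X$ gives $z\ingrs x$ and $z\in Y$ gives $z\ingrs y$ by \eqref{df:ingr}, so $z\in\Ingrs(x)\cap\Ingrs(y)$. Conversely, if $z\in\Ingrs(x)\cap\Ingrs(y)$, then by \eqref{df:ingr} there are $X,Y$ with $x\+ X$, $z\in X$, $y\+ Y$, $z\in Y$, so $z\in X\cap Y\neq\emptyset$ and hence $x\soverl y$ by \eqref{df:overls}. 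For $\Ingrs(x)\cap\Ingrs(y)\neq\emptyset\iff x\overl_{\ingrs} y$: this is immediate once we observe that $z\in\Ingrs(x)\cap\Ingrs(y)$ means exactly $z\ingrs x\wedge z\ingrs y$ by \eqref{df:Ingrs}, which is the matrix of \eqref{df:overl_ingrd}; so the existential over such $z$ is literally $x\overl_{\ingrs}y$.

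\medskip

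\noindent\textbf{Second equivalence.} For $\sext$ one can either repeat the argument with negations or simply note that each of the three statements in the second line is the negation of the corresponding statement in the first line: $x\sext y$ is the negation of $x\soverl y$ (compare \eqref{df:exts} and \eqref{df:overls}, using that $x\+ X$, $y\+ Y$ are the only relevant existential witnesses), $\Ingrs(x)\cap\Ingrs(y)=\emptyset$ is the negation of $\Ingrs(x)\cap\Ingrs(y)\neq\emptyset$, and $x\ext_{\ingrs}y$ is the negation of $x\overl_{\ingrs}y$ (compare \eqref{df:ext_ingrd} and \eqref{df:overl_ingrd}). Hence the second chain follows from the first by contraposition.

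\medskip

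\noindent I do not expect any real obstacle here — everything is a matter of carefully rewriting one defined notion as another, and the only point to keep an eye on is that in the $\sext$/$\soverl$ pair the quantifier over $X,Y$ is correctly dualized (universal over pairs summing to $x$ and $y$ versus existential), which is why phrasing the second equivalence as the contrapositive of the first is the cleanest route. Note that, unlike several earlier facts, this one needs none of \eqref{S1}--\eqref{S5}; it holds for an arbitrary structure $\langle M,\mathord{\+}\rangle$, as the statement already indicates.
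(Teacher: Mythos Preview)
Your proposal is correct and is precisely the definitional unpacking the paper has in mind; the paper itself omits the proof entirely, prefacing the fact with ``It is easy to see that''. Your observation that none of \eqref{S1}--\eqref{S5} is needed is also right and matches the unrestricted hypothesis ``for any $\langle M,\mathord{\+}\rangle$'' in the statement.
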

A somewhat harder task is to demonstrate that $\Sums$ coincide with $\+$. Let's get down to work.

\begin{lemma}\label{lem:S->Singr}
    For any $\langle M,\mathord{\+}\rangle$ satisfying \eqref{S4}, if $x\+ X$, then $x\+_{\ingrd} X$.
\end{lemma}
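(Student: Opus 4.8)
The goal is to show that if $x\+ X$, then $x$ is a mereological sum of $X$ with respect to the derived parthood relation $\ingrs$, i.e., $x\Sums X$. Unpacking the definition of $\Sums$, I must verify two conjuncts: (i) every $y\in X$ satisfies $y\ingrs x$, and (ii) every $z\ingrs x$ overlaps (in the sense of $\overl_{\ingrs}$) some $y\in X$.

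The first conjunct is immediate: if $y\in X$ and $x\+ X$, then by the very definition \eqref{df:ingr} of $\ingrs$ we have $y\ingrs x$ (witnessed by the set $X$ itself). So the content of the lemma lies in the second conjunct. The plan is to take an arbitrary $z$ with $z\ingrs x$ and produce a $y\in X$ with $y\overl_{\ingrs} z$. By \eqref{df:ingr}, $z\ingrs x$ means there is some $Z\in\power(M)$ with $x\+ Z$ and $z\in Z$. Now I have two collections, $X$ and $Z$, both summed by the same object $x$, and a designated element $z\in Z$. This is exactly the configuration to which \eqref{S4} (equivalently \eqref{S4-var}) applies: from $x\+ X$, $x\+ Z$ and $z\in Z$ it follows that there exist $y\in X$ and sets $U,W\in\power(M)$ with $y\+ U$, $z\+ W$ and $U\cap W\neq\emptyset$. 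That last conclusion is precisely $y\soverl z$ by \eqref{df:overls}, and by Fact~\ref{fact:overl-via-sum} we have $y\soverl z\iff y\overl_{\ingrs} z$. Hence $y\in X$ witnesses the second conjunct, and we conclude $x\Sums X$.

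I do not expect a serious obstacle here, since \eqref{S4} is tailor-made for this argument and Fact~\ref{fact:overl-via-sum} bridges $\soverl$ and $\overl_{\ingrs}$; the only mild subtlety is remembering to invoke \eqref{S1} or \eqref{eq:S-singleton} is \emph{not} needed — one should be careful to apply \eqref{S4} with the roles of $X$ and $Z$ in the correct order so that the supplementing element lands in $X$ rather than in $Z$. The reader might find it cleanest to phrase the step via \eqref{S4-var}: from $x\+ X$ and $z\ingrs x$ one directly gets $z\overls y$ for some $y\in X$, which already is the required overlap. Either route gives the result in a couple of lines.
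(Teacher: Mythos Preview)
Your proof is correct and follows essentially the same route as the paper: establish the first conjunct of $x\Sums X$ directly from \eqref{df:ingr}, then for the second conjunct unfold $z\ingrs x$ to get a second set summed by $x$, apply \eqref{S4} (or equivalently \eqref{S4-var}) to obtain an $s$-overlapping element of $X$, and convert $\soverl$ to $\overl_{\ingrs}$ via Fact~\ref{fact:overl-via-sum}. The only difference is notational (you use $z,Z$ where the paper uses $y,Y$), and your remark about getting the roles of the two sets right in \eqref{S4} is apt.
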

\begin{proof}
    Assume that $x\+ X$. Directly from \eqref{df:ingr} we get that $(\forall y\in X)\,y\ingrs x$. Further, if $y\ingrs x$, then by \eqref{df:ingr} there is a $Y$ such that $y\in Y$ and $x\+ Y$. Therefore by \eqref{S4-var} there is some $z\in X$ such that $z\overls y$, and by Fact~\ref{fact:overl-via-sum} it is the case that $z\overl_{\ingrs} y$, as required. So, by \eqref{df:Sum} it holds that $x\+_{\mathord{\ingrs}} X$.
\end{proof}

Let us now prove the converse of Lemma~\ref{lem:S->Singr}:

\begin{lemma}\label{Singr->S}
    For any $\langle M,\mathord{\+}\rangle\in\mathbf{S}$, if $x\+_{\ingrd} X$, then $x\+ X$.
\end{lemma}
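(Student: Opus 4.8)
The plan is to show that the hypothesis $x\Sums X$ forces $X$ to sit inside $\Ingrs(x)$ and to be pre-dense in $\Ingrs(x)$, so that the variant axiom \eqref{eq:S5-var} delivers $x\+ X$ directly. First I would unpack $x\Sums X$ via the definition of $\Sums$: the first conjunct says $(\forall y\in X)\,y\ingrs x$, which is exactly the assertion $X\subseteq\Ingrs(x)$. The second conjunct says $(\forall z\in M)\,(z\ingrs x\rarrow(\exists y\in X)\,y\overl_{\ingrs}z)$, i.e. for every $z\in\Ingrs(x)$ there is $y\in X$ with $y\overl_{\ingrs}z$; by Fact~\ref{fact:overl-via-sum} this $y\overl_{\ingrs}z$ is equivalent to $y\soverl z$, so the second conjunct is precisely the statement that $X$ is pre-dense in $\Ingrs(x)$. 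Having both facts, \eqref{eq:S5-var} immediately yields $x\+ X$.

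One subtlety to handle at the outset is the case $X=\emptyset$. If $X=\emptyset$, then the second conjunct of $x\Sums X$ would require that $\Ingrs(x)=\emptyset$ (otherwise there is a $z\ingrs x$ with no witness $y\in X$), but by \eqref{eq:Ingrs-not-empty} we always have $\Ingrs(x)\neq\emptyset$ — indeed $x\ingrs x$ by Theorem~\ref{th:partial-order}. Hence $x\Sums\emptyset$ is impossible, and we may assume $X\neq\emptyset$ in the main argument; this matches the fact (from \eqref{eq:no-sum-of-empty}) that $\+$ also has no sum of $\emptyset$, so nothing is lost.

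The heart of the proof is therefore just invoking \eqref{eq:S5-var}, which was obtained earlier precisely by combining \eqref{S5} with part~\eqref{item-1-lem:aux-for-next} of Lemma~\ref{lem:aux-for-next}; so the nontrivial content has effectively been front-loaded into that lemma. I expect the main (minor) obstacle to be making sure the translation between the $\overl_{\ingrs}$ appearing in the definition of $\Sums$ and the $\soverl$ appearing in the definition of pre-density is carried out cleanly via Fact~\ref{fact:overl-via-sum}; once that identification is stated, the rest is a direct substitution into \eqref{eq:S5-var}. Together with Lemma~\ref{lem:S->Singr}, this establishes the desired identity $\mathord{\+}=\mathord{\+_{\ingrs}}$, i.e. \eqref{eq:equality-of-sums}.
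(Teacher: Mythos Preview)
Your proof is correct and follows essentially the same route as the paper's: both extract from $x\Sums X$ the two facts $X\subseteq\Ingrs(x)$ and ``$X$ is pre-dense in $\Ingrs(x)$'' (using Fact~\ref{fact:overl-via-sum} to pass from $\overl_{\ingrs}$ to $\soverl$), and then conclude $x\+ X$. The only cosmetic difference is that you invoke the prepackaged \eqref{eq:S5-var} directly, whereas the paper unfolds the same steps explicitly (apply \eqref{S5} to get $x\+\Ingrs(x)\cap\Ingrs(X)$, use Lemma~\ref{lem:aux-for-next}\,\eqref{item-1-lem:aux-for-next} to reduce this to $x\+\Ingrs(X)$, then Lemma~\ref{lem:aux-for-next}\,\eqref{item-4-lem:aux-for-next} to obtain $x\+X$); your separate treatment of $X=\emptyset$ is harmless but unnecessary, since the pre-density condition already rules that case out.
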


\begin{proof}
 We assume that $x\+_{\ingrd} X$. Then, it holds that
\begin{enumerate}[(a)]
    \item $X\subseteq \Ingrs(x)$, and
    \item for all $a\in M$, if $a\in \Ingrs(x)$, then there is $y\in X$ such that $\Ingrs(y)\cap\Ingrs(a)\not=\emptyset$.
\end{enumerate}
Applying \eqref{S5} to (b) we obtain that $x\+\Ingrs(x)\cap\Ingrs(X)$.
By (a) and Lemma~\ref{lem:aux-for-next} \eqref{item-1-lem:aux-for-next} it is the case that $x\+\Ingrs(X)$. So $x\+ X$ by Lemma~\ref{lem:aux-for-next} \eqref{item-4-lem:aux-for-next}.
\end{proof}

Putting together Lemma \ref{lem:S->Singr} and  Lemma \ref{Singr->S}, we have the following
\begin{theorem}\label{th:sum-and-sum-induced-by-ingr-are-the-same}
For any $\langle M,\mathord{\+}\rangle\in\mathbf{S}$, it holds that\/\textup{:}
\[
x\+ X \iff x\+_{\ingrd} X\,.
\]
\end{theorem}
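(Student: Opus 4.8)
The statement to prove is Theorem~\ref{th:sum-and-sum-induced-by-ingr-are-the-same}, namely that for any $\langle M,\mathord{\+}\rangle\in\mathbf{S}$ we have $x\+ X \iff x\+_{\ingrd} X$. Since Lemma~\ref{lem:S->Singr} and Lemma~\ref{Singr->S} have just been established, the proof is essentially immediate: the forward implication $x\+ X\rarrow x\+_{\ingrd} X$ is exactly Lemma~\ref{lem:S->Singr} (which only needs \eqref{S4}), and the backward implication $x\+_{\ingrd} X\rarrow x\+ X$ is exactly Lemma~\ref{Singr->S} (which needs the full axiom set, in particular \eqref{S5}). So the plan is simply to cite both lemmas and note that together they give the biconditional.

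If I were writing this out, I would first state that the left-to-right direction follows directly from Lemma~\ref{lem:S->Singr}, since every $\langle M,\+\rangle\in\mathbf{S}$ satisfies \eqref{S4}. Then I would state that the right-to-left direction is precisely the content of Lemma~\ref{Singr->S}. Combining the two yields the equivalence, completing the proof. There is genuinely no obstacle here — the real work was done in the two preceding lemmas, and in particular in Lemma~\ref{Singr->S}, whose proof leverages \eqref{S5} via the pre-density machinery together with parts (1) and (4) of Lemma~\ref{lem:aux-for-next}. The only thing to be careful about is making sure both directions are quantified over the same class of structures; since $\mathbf{S}\subseteq\{\langle M,\+\rangle\mid\langle M,\+\rangle\models\eqref{S4}\}$, Lemma~\ref{lem:S->Singr} applies to all of $\mathbf{S}$, so there is no mismatch.

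Concretely, the proof reads as follows.

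\begin{proof}
The implication from left to right is Lemma~\ref{lem:S->Singr}, since every structure in $\mathbf{S}$ satisfies \eqref{S4}. The implication from right to left is Lemma~\ref{Singr->S}. Hence the two relations coincide.
\end{proof}

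The main ``obstacle'', such as it is, lies entirely upstream in Lemma~\ref{Singr->S}: the delicate point there is that from $x\+_{\ingrd} X$ one obtains pre-density of $X$ in $\Ingrs(x)$, applies \eqref{S5} to get $x\+\Ingrs(x)\cap\Ingrs(X)$, then uses $X\subseteq\Ingrs(x)$ together with Lemma~\ref{lem:aux-for-next}\eqref{item-1-lem:aux-for-next} to absorb the intersection, and finally invokes Lemma~\ref{lem:aux-for-next}\eqref{item-4-lem:aux-for-next} to descend from $\Ingrs(X)$ back to $X$. Once those lemmas are in hand, Theorem~\ref{th:sum-and-sum-induced-by-ingr-are-the-same} itself carries no additional difficulty and the one-line proof above suffices.
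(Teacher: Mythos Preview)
Your proposal is correct and matches the paper's approach exactly: the paper simply states that the theorem follows by putting together Lemma~\ref{lem:S->Singr} and Lemma~\ref{Singr->S}, which is precisely what you do.
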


\subsection{Parthood axioms in the sum setting}

We have already seen that the binary relation $\ingrs$ is a partial order. To show that it deserves the name \emph{parthood} we need to know that both \eqref{P4} and \eqref{P5} hold for it. The latter is an immediate consequence
Theorem~\ref{th:sum-and-sum-induced-by-ingr-are-the-same} and \eqref{S1}. For the former we use the following:
\begin{theorem}[{\citealp[Theorem 6.1]{Pietruszczak-M-eng}}]
    The condition \eqref{P4} holds in all structures that satisfy \eqref{P1}--\eqref{P3}, \eqref{P5} and\/\textup{:}
    \[
        x\Sumd X\wedge y\Sumd X\rarrow x=y\,.
    \]
\end{theorem}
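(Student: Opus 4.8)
The plan is to derive \eqref{P4} --- equivalently, the strong supplementation principle \eqref{P4'} --- from the hypotheses by a proof by contradiction resting on two preliminary facts about $\Sumd$. For $v\in M$ write $\mathord{\downarrow}v\defeq\{z\in M\mid z\ingr v\}$ for the set of parts of $v$. The first fact is immediate from reflexivity \eqref{P1}: every $v\in M$ satisfies $v\Sumd\mathord{\downarrow}v$, since by \eqref{df:Sum} the two required clauses read ``every part of $v$ is a part of $v$'' and ``every part $a$ of $v$ overlaps some part of $v$'', the latter holding with $a$ itself.

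The second, and principal, fact is a \emph{monotonicity} lemma for mereological sums: if $w\Sumd X$, $w'\Sumd Y$ and $X\subseteq Y$, then $w\ingr w'$. If $X=Y$ this is just the assumed uniqueness of sums together with \eqref{P1}. Otherwise $Y\setminus X$ is non-empty, and I would use \eqref{P5'} to choose $r$ with $r\Sumd(Y\setminus X)$ and then $t$ with $t\Sumd\{w,r\}$, and then show that $t\Sumd Y$: every element of $Y$ is a part of $w$ or of $r$, hence of $t$ by transitivity \eqref{P3}; and if $a\ingr t$ then $a$ overlaps $w$ or $r$, and in either case a two-step descent through the overlap witnesses --- using the definition \eqref{df:Sum} of $w\Sumd X$, respectively of $r\Sumd(Y\setminus X)$, and \eqref{P3} --- produces a common part of $a$ and some element of $X\subseteq Y$, respectively of $Y\setminus X\subseteq Y$. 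Uniqueness of sums then forces $t=w'$, whence $w\ingr t=w'$. Instantiating $Y$ as $\mathord{\downarrow}v$ and $w'$ as $v$ (legitimate by the first fact) gives the corollary that is actually used below: if $w\Sumd X$ and $X\subseteq\mathord{\downarrow}v$, then $w\ingr v$.

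Granting these, the theorem follows quickly. Assume $x\ningr y$ and, for contradiction, that every part of $x$ overlaps $y$. Since $x\ingr x$, this forces $x\poverl y$, so the set $W\defeq\{w\in M\mid w\ingr x\wedge w\ingr y\}$ of common parts is non-empty and, by \eqref{P5'}, has a sum $p$. I claim that $x\Sumd W$ too: the first clause of \eqref{df:Sum} is precisely the defining condition $w\ingr x$ of $W$, and for the second, given $a\ingr x$ the contradiction hypothesis supplies a $b$ with $b\ingr a$ and $b\ingr y$, whence $b\ingr x$ by \eqref{P3}, so $b\in W$ and, by \eqref{P1}, $a$ overlaps this $b$. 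Uniqueness of sums now yields $x=p$. But $W\subseteq\mathord{\downarrow}y$, so the corollary gives $p\ingr y$, i.e. $x\ingr y$ --- contradicting $x\ningr y$. Hence some part of $x$ is disjoint from $y$, which is \eqref{P4'}, equivalently \eqref{P4}. The genuinely delicate point is the verification inside the monotonicity lemma that the auxiliary object $t$ meets the second clause of \eqref{df:Sum} for $Y$; everything else is routine bookkeeping with \eqref{P1}--\eqref{P3} and the uniqueness hypothesis.
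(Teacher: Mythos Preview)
Your proof is correct. The paper does not supply its own proof of this theorem; it simply quotes it from \cite{Pietruszczak-M-eng} (Theorem~6.1) and applies it, so there is nothing to compare against at the level of argument. Your route---first a monotonicity lemma for $\Sumd$ (if $w\Sumd X$, $w'\Sumd Y$ and $X\subseteq Y$ then $w\ingr w'$), and then the observation that under the contradiction hypothesis $x$ itself already sums the set $W$ of common parts of $x$ and~$y$---is a standard and clean way to obtain \eqref{P4}. The verification that the auxiliary $t$ satisfies the second clause of \eqref{df:Sum} for $Y$, which you flag as the delicate step, goes through exactly as you describe via a two-stage descent using \eqref{P3}.

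One cosmetic remark: once you have established $x\Sumd W$, the auxiliary sum $p$ obtained from \eqref{P5'} and the appeal to uniqueness are superfluous; you can apply the corollary of your monotonicity lemma directly to $x\Sumd W$ and $W\subseteq\mathord{\downarrow}y$ to conclude $x\ingr y$ and reach the contradiction.
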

But the above axiom holds for $\ingrs$ due to \eqref{S2} and Theorem~\ref{th:sum-and-sum-induced-by-ingr-are-the-same}, and thus we have that:
\begin{equation}\tag{P4$'$}
    x\ningrs y\rarrow (\exists z\in M)\,(z\ingrs x\wedge z\ext_{\ingrs} y)
\end{equation}
obtains in any structure from $\mathbf{S}$. Therefore
\begin{theorem}\label{th:ingr-induced-from-sum-satisfies-ingr-axioms}
If $\langle M,\mathsf{S}\rangle\in\mathbf{S}$ and $\langle M,\ingrd,\mathsf{S}\rangle$ is its definitional extension by \eqref{df:ingr},
then $\langle M,\mathord{\ingrd},\mathsf{S}\rangle$ satisfies all $\ingr$-axioms.
\end{theorem}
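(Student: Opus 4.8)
The plan is to verify each of the $\ingr$-axioms \eqref{P1}--\eqref{P5} for the defined relation $\ingrs$ by collecting the results already established in this section; no new construction is required, only a check that the hypotheses of each cited fact line up.

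First I would dispatch \eqref{P1}, \eqref{P2} and \eqref{P3}: these assert that $\ingrs$ is reflexive, antisymmetric and transitive, which is exactly the content of Theorem~\ref{th:ant-trans} together with Theorem~\ref{th:partial-order}. Next I would handle \eqref{P5}. Recall that in Section~\ref{sec:parthood-mereology} the axiom \eqref{P5} was shown, over \eqref{P1}--\eqref{P3}, to be equivalent to the unrestricted sum axiom \eqref{P5'}, i.e.\ $(\forall X\in\powerne(M))(\exists x\in M)\,x\Sums X$, where $\Sums$ is \eqref{df:Sum} read with $\ingrs$ in place of the unspecified parthood. By \eqref{S1}, every non-empty $X\subseteq M$ has some $x$ with $x\+ X$, and by Theorem~\ref{th:sum-and-sum-induced-by-ingr-are-the-same} that same $x$ satisfies $x\Sums X$; hence \eqref{P5'}, and therefore \eqref{P5}, holds for $\ingrs$.

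Finally, for \eqref{P4} I would invoke the theorem of Pietruszczak quoted immediately before the statement: \eqref{P4} is a consequence of \eqref{P1}--\eqref{P3}, \eqref{P5} and the uniqueness of sums, $x\Sums X\wedge y\Sums X\rarrow x=y$. The first four conjuncts are already in hand, and uniqueness transfers from \eqref{S2} through Theorem~\ref{th:sum-and-sum-induced-by-ingr-are-the-same}. Thus \eqref{P4}, equivalently \eqref{P4'}, holds, and consequently $\langle M,\mathord{\ingrd},\mathsf{S}\rangle$ satisfies all $\ingr$-axioms.

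I do not expect a genuine obstacle here; the substantive work was Theorem~\ref{th:sum-and-sum-induced-by-ingr-are-the-same}. The one point requiring care is purely bookkeeping: making sure that the symbol $\Sumd$ appearing in the cited theorem of Pietruszczak and in \eqref{P5'} denotes precisely $\Sums$, the relation obtained by instantiating \eqref{df:Sum} with the defined relation $\ingrs$, so that Theorem~\ref{th:sum-and-sum-induced-by-ingr-are-the-same} applies verbatim; once this identification is fixed, both the uniqueness hypothesis and the equivalence of \eqref{P5} with \eqref{P5'} go through without change.
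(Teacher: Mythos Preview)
Your proposal is correct and matches the paper's own argument essentially line for line: \eqref{P1}--\eqref{P3} via Theorems~\ref{th:ant-trans} and~\ref{th:partial-order}, \eqref{P5} via \eqref{S1} together with Theorem~\ref{th:sum-and-sum-induced-by-ingr-are-the-same}, and \eqref{P4} via Pietruszczak's theorem with uniqueness transferred from \eqref{S2} through Theorem~\ref{th:sum-and-sum-induced-by-ingr-are-the-same}. The only cosmetic point is that the equivalence of \eqref{P5} and \eqref{P5'} is purely definitional (no appeal to \eqref{P1}--\eqref{P3} is needed), but this does not affect the argument.
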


\section{The sum axioms in mereological structures}\label{sec:sum-axioms-in-MS}

To show the correctness of our approach, it remains to demonstrate that
\begin{theorem}\label{th:sum-induced-from-ingr-satisfies-sum-axioms}
If $\langle M,\mathord{\ingr}\rangle\in\MS$ and $\langle M,\mathord{\ingr},\mathord{\Sumd}\rangle$ is its definitional extension by \eqref{df:Sum}, then $\langle M,\mathord{\ingr},\mathord{\Sumd}\rangle$ satisfies all $\+$-axioms.
\end{theorem}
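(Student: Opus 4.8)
We must show that the hybrid relation $\Sumd\subseteq M\times\power(M)$ defined by \eqref{df:Sum} satisfies \eqref{S1}--\eqref{S5}. The plan is to verify the five axioms one at a time, after first replacing every derived notion that occurs in them by its $\ingr$-theoretic counterpart. The key point is that, because $\langle M,\ingr\rangle\in\MS$, the $s$-part relation obtained from $\Sumd$ through \eqref{df:ingr} coincides with $\ingr$ itself: this is exactly Fact~\ref{fa:ingr=ingrsingr}, displayed as \eqref{eq:equality-of-parthoods}. Hence, when $\+$ is instantiated to $\Sumd$, the set $\{y\in M\mid y\ingr x\}$ plays the role of $\Ingrs(x)$ and $\bigcup_{a\in X}\{y\in M\mid y\ingr a\}$ that of $\Ingrs(X)$; and by Fact~\ref{fact:dfSum-dfOverl->ESums-overl} the overlap and disjointness relations derived from $\Sumd$ are $\poverl$ and $\pext$, so the relation $\soverl$ in the definition of pre-density also becomes $\poverl$. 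After these substitutions each of \eqref{S1}--\eqref{S5} is a statement about $\ingr$ and $\Sumd$ only, to be read off from \eqref{df:Sum} and \eqref{P1}--\eqref{P5}.

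Three of the axioms then come for free. \eqref{S1} is just the unrestricted sum axiom \eqref{P5'}. For \eqref{S3}, assuming $x\Sumd X$, $y\Sumd Y$ and $x\in Y$, I verify the two conjuncts of $y\Sumd(X\cup Y)$ directly: every member of $Y$ is a part of $y$; every member of $X$ is a part of $x$, and $x\ingr y$ holds because $x\in Y$ and $y\Sumd Y$, so by \eqref{P3} every member of $X$ is a part of $y$ as well; and every part of $y$ overlaps some member of $Y\subseteq X\cup Y$. For \eqref{S4}, from $x\Sumd Y$ and $y\in Y$ the first conjunct of \eqref{df:Sum} gives $y\ingr x$, and then the second conjunct of $x\Sumd X$ produces $z\in X$ with $y\poverl z$; by Fact~\ref{fact:dfSum-dfOverl->ESums-overl} this means $y\overl_{\Sumd}z$, which unfolds via \eqref{df:overl-Sumd} into exactly the sets $U$ and $Z$ with $y\Sumd U$, $z\Sumd Z$ and $U\cap Z\neq\emptyset$ that \eqref{S4} asks for.

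The genuine mereological content sits in \eqref{S2} and \eqref{S5}. For \eqref{S2} I show uniqueness of $\Sumd$-sums: if $x\Sumd X$, $y\Sumd X$ and $x\ningr y$, then the strong supplementation principle \eqref{P4'} gives $z\ingr x$ with $z\pext y$; since $z\ingr x$ and $x\Sumd X$, some $w\in X$ satisfies $z\poverl w$, say with common part $u$, and $w\ingr y$ because $y\Sumd X$, so $u$ is a common part of $z$ and $y$, contradicting $z\pext y$; by symmetry $y\ingr x$, whence $x=y$ by \eqref{P2}. For \eqref{S5}, after the reductions the hypothesis says that every part of $x$ overlaps some member of $X$, and the conclusion is $x\Sumd D$, where $D=\{y\in M\mid y\ingr x\text{ and }(\exists a\in X)\,y\ingr a\}$. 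The first conjunct of \eqref{df:Sum} holds because $D\subseteq\{y\in M\mid y\ingr x\}$. For the second, given $a\ingr x$ the hypothesis supplies $c\in X$ with $c\poverl a$, hence a common part $u$ of $c$ and $a$; then $u\ingr x$ by \eqref{P3} (through $a$) and $u\ingr c$ with $c\in X$, so $u\in D$, while $u\ingr a$ together with $u\ingr u$ yields $a\poverl u$.

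I expect \eqref{S5} to be the step that most needs care --- not because it is deep, but because ``pre-dense'' and the set $D$ have to be unwound correctly before the two verifications become mechanical --- and \eqref{S2} is the only place where \eqref{P4} is really used; the rest reduces to a direct appeal to \eqref{df:Sum} once the identifications supplied by Facts~\ref{fa:ingr=ingrsingr} and \ref{fact:dfSum-dfOverl->ESums-overl} are in place. Together with Theorem~\ref{th:ingr-induced-from-sum-satisfies-ingr-axioms}, this theorem yields the definitional equivalence of $\mathbf{S}$ and $\MS$ and the one-to-one correspondence between sum structures and mereological structures.
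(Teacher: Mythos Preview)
Your proposal is correct and follows essentially the same approach as the paper: both verify \eqref{S1}--\eqref{S5} directly from \eqref{df:Sum} after identifying $\ingr_{\Sumd}$ with $\ingr$ and $\overl_{\Sumd}$ with $\poverl$, and the individual arguments for each axiom match (your \eqref{S2} is the contrapositive phrasing of the paper's direct use of \eqref{P4}, and for \eqref{S4} you invoke Fact~\ref{fact:dfSum-dfOverl->ESums-overl} where the paper writes down the witnessing sets explicitly). Your up-front reduction via Facts~\ref{fact:dfSum-dfOverl->ESums-overl} and~\ref{fa:ingr=ingrsingr} is a slightly cleaner organization of exactly what the paper does in the course of checking \eqref{S5}.
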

\begin{proof}
    \eqref{S1} Immediate from \eqref{P5} and \eqref{df:Sum}.

    \smallskip

    \eqref{S2} Let $x\Sumd X$ and $y\Sumd X$. If $u\ingr x$, then by \eqref{df:Sum} there is an $a\in X$ such that $a\poverl u$. But, again by the definition, $a\ingr y$, so by the arbitrariness of $u$ and by \eqref{P4}, we obtain that $x\ingr y$. In the analogous way we prove that $y\ingr x$, so from the antisymmetry of parthood, we obtain that $x=y$.

    \smallskip

    \eqref{S3} Assume $x\Sumd X$, $y\Sumd Y$ and $x\in Y$. If $a\in X$, then $a\ingr x$. But $x\ingr y$ by \eqref{df:Sum}, so transitivity of parthood entails that $a\ingr y$, as required. If $a\in Y$, then $a\ingr y$ from the same definition and from the second assumption. Finally, if $a\ingr y$, then there is an $a\in Y$ (and the more so in $X\cup Y$) such that $a\poverl y$.

    \smallskip

    \eqref{S4} Suppose $x\Sumd X$ and $x\Sumd Y$ and $y\in Y$. Since $y\ingr x$, in $X$ there is a $u$ such that $y\poverl u$. Put $Z\defeq\{w\in M\mid w\ingr y\}$ and $U\defeq\{w\in M\mid w\ingr u\}$. Clearly, $Z\cap U\neq\emptyset$ and $y$ and $u$ are sums of, respectively, $Z$ and $U$.

    \smallskip

    \eqref{S5} Observe that for $\Sumd$, $\Ingr_{\Sumd}(x)=\{y\in M\mid y\ingr x\}$ (recall \eqref{eq:equality-of-parthoods}). Let $X$ be pre-dense in $\Ingr_{\Sumd}(x)$. It is clear that every element of $\Ingr_{\Sumd}(x)\cap\Ingr_{\Sumd}(X)$ must be part of~$x$. On the other hand, if $y\ingr x$, then in $X$ there is an $a$ such that $a\poverl y$. Assume that $b$ is one of their common parts, i.e., $b\ingr a$ and $b\ingr y$. Then, for this $b$, by the reflexivity of $\ingr$ it holds  that $b\ingr b$, and so $b\poverl y$. Moreover, it follows from the transitivity of $\ingr$ that the object $b$ is an element of $\Ingr_{\Sumd}(x)\cap\Ingr_{\Sumd}(X)$, so \eqref{S5} holds for $\Sumd$.

    \smallskip

    All the points above together show that any definitional extension of a~mereological structure by means of \eqref{df:Sum} satisfies the axioms for the mereological sum.
\end{proof}

\section{Independence of the axioms for \texorpdfstring{$\+$}{}}\label{sec:independence}

We will now show that

\begin{theorem}
\eqref{S1}--\eqref{S5} is an independent set of axioms.
\end{theorem}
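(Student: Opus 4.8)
The plan is to prove independence in the standard way: for each axiom \eqref{Si} ($i=1,\dots,5$), exhibit a structure $\langle M,\mathord{\+}\rangle$ that satisfies all the other four axioms but violates \eqref{Si}. Since the structures are small, each case should amount to drawing a diagram of the sort used in Figure~\ref{fig:S5} (objects as dots, dashed arrows from an object to the sets it sums) and then checking the axioms mechanically against that finite list of sum-facts. For the verification it will be convenient to first compute, in each candidate model, the induced relation $\ingrs$ via \eqref{df:ingr} and the sets $\Ingrs(x)$, since \eqref{S4-var}, \eqref{S5} and Lemma~\ref{lem:aux-for-next} are all phrased in those terms.

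\textbf{Independence of \eqref{S5}.} This one is already done: the two-element structure of Figure~\ref{fig:S5} satisfies \eqref{S1}--\eqref{S4} (and even \eqref{SS}) but not \eqref{S5}, as noted in the text. So I would simply cite that figure.

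\textbf{Independence of the remaining axioms.} For \eqref{S1}, I would take a nonempty set together with the empty sum relation, or — to keep other axioms alive — a structure in which some nonempty collection genuinely has no sum while every collection that does have a sum behaves well; the point is that \eqref{S2}--\eqref{S5} are all conditional on the existence of sums, so a model that is "too sparse" satisfies them vacuously enough while failing the existence clause \eqref{S1}. For \eqref{S2}, I would take a small domain (two or three points) and let a single collection have two distinct sums, arranging the rest so that \eqref{S1}, \eqref{S3}, \eqref{S4}, \eqref{S5} still hold; the natural candidate is two points $a,b$ with $a\+\{a,b\}$ and $b\+\{a,b\}$ (plus $a\+\{a\}$, $b\+\{b\}$). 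For \eqref{S3}, I would look for a model where $x\+X$, $y\+Y$, $x\in Y$ but $y\+ X\cup Y$ fails — e.g. take a chain $a\ingr b\ingr c$ worth of points but omit exactly one "augmentation" sum-fact that \eqref{S3} would force, checking that \eqref{S1}, \eqref{S2}, \eqref{S4}, \eqref{S5} survive. For \eqref{S4}, I would exhibit a model where one collection has two sums that are "too far apart", i.e. $x\+X$ and $x\+Y$ with some $y\in Y$ having no $z\in X$ that s-overlaps it — again achievable on a two- or three-point domain by adding a spurious sum-fact.

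\textbf{The main obstacle} I anticipate is not any single case but the bookkeeping: \eqref{S5} is a fairly strong closure condition, so in each of the first four models one must make sure that whatever sum-facts are present are exactly the ones \eqref{S5} (together with \eqref{S4-var} and \eqref{eq:S5-var}) demands — neither too few nor too many — while still breaking the target axiom. In practice this means that for each candidate model I would (i) list the intended sum-facts, (ii) derive $\ingrs$ and all $\Ingrs(x)$, (iii) check \eqref{S5} by going through every $x$ and every $X$ pre-dense in $\Ingrs(x)$, and only then (iv) verify the remaining axioms and the failure of the target one. The smallest workable domains are likely one point (for \eqref{S1}) and two points (for \eqref{S2}, \eqref{S3}, \eqref{S4}, \eqref{S5}), possibly three for \eqref{S3}, so the diagrams stay small and the casework, though tedious, is finite and routine.
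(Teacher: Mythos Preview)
Your overall strategy---five countermodels, one per axiom---is exactly the paper's, and your candidates for \eqref{S2} and \eqref{S5} coincide with the paper's (the two-point model with both $a,b$ summing $\{a,b\}$, and Figure~\ref{fig:S5}, respectively). The remaining three cases, however, hide more than routine bookkeeping.

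\emph{The real gap is \eqref{S3}.} Your suggestion of a two- or three-point chain with one ``augmentation'' omitted will not survive the other axioms. On three points with \eqref{S1} and \eqref{S2} in force you have a unique sum for every nonempty subset, and once you try to make \eqref{S3} fail the pre-density clause of \eqref{S5} keeps collapsing the model back onto you (try it: whenever you force $f(\{a,b,c\})\neq f(Y)$ you end up with some $x$ for which \eqref{S5} demands a sum you have just denied, or else some subset loses its sum and \eqref{S1} goes). The paper's move is quite different: it starts from a \emph{non-transitive} parthood relation on seven points (Figure~\ref{fig:for-S3}), takes the induced $\Sumd$ as the sum relation, and observes that the failure of transitivity for~$\ingr$ translates directly into a failure of \eqref{S3} for $\Sumd$, while the careful choice of the poset keeps \eqref{S1}, \eqref{S2}, \eqref{S4}, \eqref{S5} alive. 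The size of the model is not an accident; a three-point non-transitive chain already fails \eqref{S1}.

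\emph{For \eqref{S1}}, your first idea---the empty sum relation---does not work, and your diagnosis of why it should (``\eqref{S2}--\eqref{S5} are all conditional on the existence of sums'') is the error: the \emph{conclusion} of \eqref{S5} asserts a sum. Concretely, if $\mathord{\+}=\emptyset$ then every $\Ingrs(x)=\emptyset$, so every $X$ is vacuously pre-dense in $\Ingrs(x)$, and \eqref{S5} demands $x\+\emptyset$, which fails. The paper's model is the minimal repair: $M=\{a,b\}$ with $a\+\{a\}$ and $b\+\{b\}$ only; now $\Ingrs(a)=\{a\}$, pre-dense sets must contain~$a$, and \eqref{S5} reduces to $a\+\{a\}$.

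\emph{For \eqref{S4}}, the paper's witness is smaller and cleaner than what you sketch: a single point $a$ with $a\+\{a\}$ and $a\+\emptyset$. Taking $X=\emptyset$ and $Y=\{a\}$ kills \eqref{S4} instantly (there is no $z\in\emptyset$), while \eqref{S5} still holds because the only pre-dense $X$ in $\Ingrs(a)=\{a\}$ is $\{a\}$ itself. Your ``spurious sum-fact on two or three points'' approach runs into the same \eqref{S5} trap as above unless you also allow a sum of~$\emptyset$, which is precisely the trick you did not mention.
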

\begin{proof}
    \eqref{S1} Consider the structure $\langle M,\mathord{\+}\rangle$, where $M\defeq\{a,b\}$ and $\+$ consists of $a\+\{a\}$ and $b\+\{b\}$. So, the collection $\{a,b\}$ does not have any sum, which indicates that the principle \eqref{S1} fails. However, it is routine to check that all the remaining axioms hold for the structure.

    \smallskip

    \eqref{S2} For a witness, consider again the set $M\defeq\{a,b\}$ and the structure depicted in Figure \ref{fig:independence-S2}. We have $a\+\{a,b\}$ and $b\+\{a,b\}$, but $a\neq b$. So \eqref{S2} fails. But all other axioms are satisfied, as needed.

    \begin{figure}
    \centering
\begin{tikzpicture}
\node (a) [circle,draw,inner sep=0pt,minimum size=1mm,fill=black] at (0,0) [label=below:$a$]{};
\node (b) [circle,draw,inner sep=0pt,minimum size=1mm,fill=black] at (3,0) [label=below:$b$]{};
\node (a1) [] at (0,1.5) {$\{a\}$};
 \node(b1)[] at (3,1.5) {$\{b\}$};
  \node(c)[] at (1.5,1.5) {$\{a,b\}$};
\draw[dashed,->](a) to (a1);
\draw[dashed,->](a) to (c);
 \draw[dashed,->](b) to (b1);
  \draw[dashed,->](b) to (c);
\end{tikzpicture}
    \caption{A structure $\langle M,\mathord{\+}\rangle$ where \eqref{S2} fails.}
    \label{fig:independence-S2}
\end{figure}

    \smallskip

    \eqref{S3} To make the presentation clear, we will construct a suitable model from a non-transitive part-of relation depicted in Figure~\ref{fig:for-S3}. The diagram is to be interpreted bottom-up, e.g., $4$ is part of $2$, and so on. $x$ is part of $y$ only if there is  bottom-up path from $x$ and $y$ composed of lines of the same style. Thus, we see, for example, that $6$ is part of $5$ and $5$ is part of $3$, but $6$ is not a part of $3$. So the relation $\ingr$ in the diagram is not transitive. Let $\Sumd$ be the sum relation for $\ingr$, and let us consider $\langle M,\Sumd\rangle$. As it can be seen, $3\Sumd\{4,5\}$ and $5\Sumd\{6,7\}$ but not $3\Sumd\{4,5,6,7\}$, as $6\ningr 3$. We leave the straightforward verification of the remaining four axioms to the reader.

    \begin{figure}
        \centering
        \begin{tikzpicture}
            \coordinate (4) at (0,0);
            \coordinate (6) at (2,0);
            \coordinate (7) at (4,0);
            \coordinate (5) at (3,1.5);
            \coordinate (2) at (0,3);
            \coordinate (3) at (3,3);
            \coordinate (1) at (1.5,4.5);
            \node [circle,draw,inner sep=0pt,minimum size=1mm,fill=black,label=below:$4$] at (4) {};
            \node [circle,draw,inner sep=0pt,minimum size=1mm,fill=black,label=below:$6$] at (6) {};
            \node [circle,draw,inner sep=0pt,minimum size=1mm,fill=black,label=below:$7$] at (7) {};
            \node [circle,draw,inner sep=0pt,minimum size=1mm,fill=black,label=right:$5$] at (5) {};
            \node [circle,draw,inner sep=0pt,minimum size=1mm,fill=black,label=right:$3$] at (3) {};
            \node [circle,draw,inner sep=0pt,minimum size=1mm,fill=black,label=left:$2$] at (2) {};
            \node [circle,draw,inner sep=0pt,minimum size=1mm,fill=black,label=above:$1$] at (1) {};
            \draw (4) -- (2) (2) -- (1) (3) -- (1) (5) -- (3) (7) -- (5) (6) -- (2) (4) -- (3);
            \draw [dotted] (6) -- (5);
        \end{tikzpicture}
        \caption{A parthood basis for the sum structure in which \eqref{S3} fails.}
        \label{fig:for-S3}
    \end{figure}
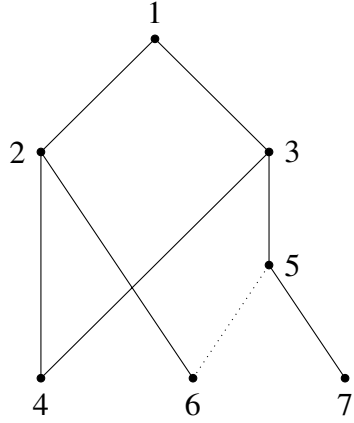

    \smallskip

    \eqref{S4} Consider the structure provided in Figure \ref{fig:independence-S4}: $M\defeq\{a\}$ and $a\+\{a\}$ and $a\+\emptyset$. The axiom \eqref{S4} fails in it, as there are no objects in the empty set. It can be easily seen that all the remaining four axioms are satisfied.

\begin{figure}
    \centering
\begin{tikzpicture}
\node(a)[circle,draw,inner sep=0pt,minimum size=1mm,fill=black] at (0,0) [label=below:$a$]{};
\node(a1)[] at (-1,1.5) {$\{a\}$};
 \node(a2)[] at (1,1.5) {$\emptyset$};
\draw[dashed,->](a) to (a1);
\draw[dashed,->](a) to (a2);
\end{tikzpicture}
    \caption{A structure $\langle M,\mathord{\+}\rangle$ where \eqref{S4} fails.}
    \label{fig:independence-S4}
\end{figure}

    \smallskip

    \eqref{S5} A witness to this is the structure given in Figure \ref{fig:S5}.

        \smallskip

        Therefore, any axiom is not a consequence of other axioms, and the set of all axioms is independent.
\end{proof}

\section{Summary and further work}\label{sec:summary}

We have presented and analyzed well-justified axioms for a system of mereology in which the \emph{sum} relation replaces the \emph{part-of} relation as a basic concept. The system composed of \eqref{S1}--\eqref{S5} works well in the sense that within it we capture the binary \emph{parthood} relation, and we prove that there is a one-to-one correspondence between mereological structures (in the sense of \citealp{Pietruszczak-M-eng})  and the sum structures as defined here.

There are good things and there is one we consider <<bad>> about our system. The good things are axioms \eqref{S1}--\eqref{S4} that are natural, simple and has an immediate interpretation. \eqref{S5}, on the other hand,  formulated in the two primitives of the theory, $\+$ and $\in$, is  complex, convoluted and contrasts with the simplicity of the remaining four postulates:
\begin{footnotesize}
\begin{equation*}
\begin{split}
(\forall y\in M)\,\bigl[(\exists Z\in\power(M))(x\+ Z&{}\wedge  y\in Z)\rarrow (\exists u\in X)(\exists Y, U\in\power(M)) (y\+ Y\wedge u\+ U\wedge Y\cap Z\not=\emptyset)\bigr]\rarrow \\
&x\+ \bigcup_{u\in X}\{y\in M\mid (\exists U_1,U_2\in\power(M))\,  (x\+ U_1\wedge u\+ U_2\wedge y\in U_1\wedge y\in U_2)  \}\,.
\end{split}
\end{equation*}
\end{footnotesize}



\noindent We believe that it can be replaced by simpler conditions, which we hope to discover in future investigations into the sum structures. We also believe that the sigma operation from Section~\ref{sub:sigma} may be employed to obtain a completely new abstract perspective on theories of parts and wholes.

The theory from this paper falls within the scope of second-order systems. There are also other formal tools that are suitable for studying different relations, among which modal logic is an important tradition. As suggested by \cite{Blackburn-et-al-ML}, the tool usually has lower computational complexity although it is essentially a second-order theory, while many theories of mereology are undecidable (see e.g., \citealp{Tsai-ACPOTHOMT}). Based on a {\em relational semantics}, \cite{Li-et-al-MBL} defined a series of mereological theories for $\ingr$, but it remains to develop a desired modal system that corresponds to our axioms for $\+$, for which the {\em neighborhood semantics} \citep{Pacuit-NSFML} would be useful. The development of this kind of approach is going to be our next step.

\section*{Acknowledgements}

\begin{sloppypar}

This research was funded by the National Science Center (Poland), grant number 2020/39/B/HS1/00216, ``Logico-philosophical foundations of geometry and topology''.

For the purpose of Open Access, the authors have applied a CC-BY public copyright license to any Author Accepted Manuscript (AAM) version arising from this submission.

\end{sloppypar}

\bibliographystyle{plainnat}
\providecommand{\noop}[1]{}

\end{document}